\documentclass[10pt,a4paper]{article}
\usepackage{amsmath,amsthm,amssymb}
\usepackage{geometry}


\usepackage{color}
\usepackage[dvipsnames]{xcolor}
\definecolor{darkred}{rgb}{0.8, 0.0, 0.0}

\usepackage{kotex}

\colorlet{Color0}{Dandelion}
\colorlet{Color1}{red}
\colorlet{Color2}{Green}
\colorlet{Color3}{BlueViolet}

\usepackage{bbm}
\usepackage{url}
\usepackage[normalem]{ulem}
\usepackage[T1]{fontenc}
\usepackage{caption}
\usepackage{subcaption}
\usepackage{hyperref}
\hypersetup{
    colorlinks=true,
    urlcolor = {darkred},
    linkcolor = {darkred},
    citecolor = {darkred},
    linkcolor = {darkred},
    linktoc=all
}
\usepackage{amsthm} 
\usepackage{cleveref}
\usepackage{amssymb}
\usepackage{framed}
\usepackage{graphicx}
\usepackage{tikz}
\usepackage{tikz-cd}
\usetikzlibrary{positioning}
\usetikzlibrary{math}
\usepackage{caption}
\usepackage{subcaption}
\usepackage[utf8]{inputenc}
\usepackage[inline]{enumitem} 
\usepackage{mathtools}
\usepackage{etoolbox}
\usepackage{extpfeil}
\usepackage{soul}
\usepackage{authblk}
\usepackage{mathtools}

\usepackage{multicol}

\usetikzlibrary{matrix,arrows,decorations.pathmorphing,decorations.pathreplacing,patterns,shapes.geometric,fadings}

\theoremstyle{definition}
\newtheorem{theorem}{Theorem}[section]

\newtheorem{proposition}[theorem]{Proposition}

\newtheorem{remark}[theorem]{Remark}

\newtheorem{definition}[theorem]{Definition}
\newtheorem{lemma}[theorem]{Lemma}

\newtheorem*{question}{Question}
\newtheorem{example}[theorem]{Example}
\counterwithin*{claim}{theorem}


\newcommand{\Zplus}{\mathbb{Z}_{\geq 0}}

\newcommand{\SP}{\mathrm{SP}}

\newcommand{\Z}{\mathbb{Z}}
\newcommand{\R}{\mathbb{R}}

\newcommand{\N}{\mathbb{N}}

\newcommand{\RNum}[1]{\uppercase\expandafter{\romannumeral #1\relax}}

\definecolor{darkblue}{rgb}{0.0, 0.0, 0.8}
\definecolor{darkred}{rgb}{0.8, 0.0, 0.0}
\definecolor{darkgreen}{rgb}{0.0, 0.8, 0.0}


\Crefformat{claim}{Claim #2#1#3}
\title{Metric Topologies on Multiset Spaces as Topological Monoids and Their Group Completion}
\author[1]{Donghan Kim\thanks{\texttt{patrick6231@kaist.ac.kr}}}
\affil[1]{Department of Mathematical Sciences, KAIST, South Korea}
\date{}

\begin{document}

\maketitle

\begin{abstract}
We construct a multiset space $\N[X]$ over a metric space $X$ that simultaneously enjoys desirable topological properties and admits a natural matching metric $d_{\N[X]}$, making it a metrizable abelian topological monoid whose structure is compatible with the original metric on $X$.  
This framework extends naturally to the free abelian group $\Z[X]$, where a metric $d_{\Z[X]}$ induces a metrizable abelian topological group structure.  
We further identify the metric completion of $\N[X]$, showing that it carries a canonical extension of the matching metric.
\end{abstract}

\section{Introduction}
A \emph{multiset} is a collection of objects in which repetitions are
allowed, each element occurring with a finite multiplicity.  
When the underlying set $X$ is specified, a multiset of $X$ may be described as
a function $m: X \to \Zplus$ with finite support.  

Multisets are of interest to both mathematicians and computer scientists,
arising naturally in algebra, where they underlie constructions such as algebra of meets and joins~\cite{brink1987some}; in combinatorics,
where they appear in counting problems, prime decomposition, and partition
theory~\cite{aigner2012combinatorial,anderson2002combinatorics,stanley2011enumerative};
and in computer science, where they play a fundamental role in the semantics of
databases, query languages, and data analysis
\cite{grumbach1993towards,libkin1994some,libkin1995representation,singh2007overview}.
Beyond these classical domains, multisets also arise in applied areas such as
information retrieval~\cite{miyamoto2000rough} and knowledge representation~\cite{sheremet2024multiset}, highlighting their broad
and versatile significance.

For a fixed set $X$, the collection of multisets on $X$ carries a natural
abelian monoid structure. A multiset can be identified with a finitely
supported function $m: X \to \Zplus$, and addition is defined pointwise by
\((m+n)(x)=m(x)+n(x)\) for $x \in X$.  
The zero multiset is the constant function $0(x)\equiv 0$.  
Equivalently, if multisets are written as finite formal sums
\(\sum a_x\,x\) with $a_x\in\Zplus$, then addition is given by combining
coefficients~\cite{wildberger2003new}.

From a topological perspective, if $X$ is a topological space, one may ask how
to endow its multiset space with a compatible topology.  
To respect the natural abelian monoid structure, it is desirable to choose a
topology in which the addition operation is continuous.  
Two classical constructions are the infinite symmetric product of
Dold--Thom~\cite{dold1958quasifaserungen} and McCord’s classifying
space~\cite{mccord1969classifying}, which, under mild assumptions on $X$
(in particular, when $X$ is compactly generated), equip the collection of
multisets with the structure of an abelian topological monoid.  
These ideas were further developed in homotopy theory~\cite{bandklayder2019dold} and in the framework of
partial monoids~\cite{mostovoy2007partial}.

From a metric or combinatorial standpoint, a natural question arises: if the
underlying space $X$ is metrizable, can the associated multiset space also be
metrizable?  
Indeed, multisets are often compared using matching or earth–mover type
metrics~\cite{chen2018metric}, and recent work has investigated metric
semigroups of multisets in Banach and combinatorial
contexts~\cite{dolishniak2024metric}.  
Unfortunately, even when $X$ is metrizable, the infinite symmetric product is in
general not metrizable (see Example~\ref{example:non_met}).
This leads us to the following question:
\begin{framed}
\begin{question} 
given a metric space $(X,d)$, can one
construct a metric on the associated multiset space that admits an isometric
embedding of $X$ (thus remaining compatible with the original metric), while
also making the addition operation continuous so that the multiset space becomes
an abelian topological monoid? 
\end{question}
\end{framed}
\vspace{2mm}
Our answer is positive.  
In Section~\ref{sec:N[X]_met} we construct such a space of multisets $\N[X]$, which is
metrizable, admits an isometric embedding of $X$, and carries the structure of a
topological abelian monoid.  
Moreover, allowing negative multiplicities leads naturally to the free abelian
group $\Z[X]$, for which we also establish a compatible metric structure.  
Our main results can be summarized as follows:

\begin{itemize}
  \item We construct the multiset space $\N[X]$ associated to a metric space
    $(X,d)$, endowed with a \emph{matching distance} $d_{\N[X]}$.
    This space is metrizable, carries the structure of a topological abelian
    monoid under addition of multisets, and admits an isometric embedding
    of $X$.
  \item We prove that for a pointed metric space $(X,e)$, if the basepoint $e$ is
    isolated, then $\N[X]$ coincides with the infinite symmetric product $\SP(X)$.
    In contrast, if $e$ is not isolated, the space $\SP(X)$ may fails to be first
    countable and hence is not metrizable.
  \item We extend the construction to the free abelian group $\Z[X]$,
    defining a metric $d_{\Z[X]}$ using positive and negative parts of formal
    sums, making
    $\Z[X]$ a metrizable abelian topological group.
  \item We show that the canonical maps
    \[
      X \;\hookrightarrow\; \N[X] \;\hookrightarrow\; \Z[X]
    \]
    are isometric embeddings with respect to the given metric of $X$.
  \item We establish completeness criteria: in general $\N[X]$ is not complete
    even if $X$ is, but we characterize its completion as the space
    $\overline{\N}[X]$, equipped with the
    extended matching metric $d_\ell$, which is complete whenever $X$ is.
\end{itemize}

\section{Preliminaries}\label{sec:prelim}
Before proceeding, we briefly recall some preliminaries.

\subsection{Basic Topological Properties}
A \textbf{pseudometric} on a set $X$ is a function
\[
d \colon X \times X \to \mathbb{R}
\]
satisfying the following properties for all $x,y,z \in X$:
\begin{enumerate}[label=(\roman*)]
    \item \textbf{Nonnegativity:} $d(x,y) \ge 0$,
    \item \textbf{Symmetry:} $d(x,y)=d(y,x)$, and
    \item  \textbf{Triangle inequality:} $d(x,z) \le d(x,y)+d(y,z)$.
\end{enumerate}
A pseudometric $d$ is called a \textbf{metric} if, in addition,
$d(x,y)=0$ holds if and only if $x=y$.

Given $\epsilon > 0$, the set
\[
B_d(x,\epsilon) := \{\, y \in X \mid d(x,y) < \epsilon \,\}
\]
is called the \textbf{$\epsilon$-ball centered at $x$}. We omit the subscript
$d$ if the metric is clear from context. If $d$ is a metric on $X$, the
collection of all $\epsilon$-balls $B_d(x,\epsilon)$, for $x \in X$ and
$\epsilon > 0$, forms a basis for a topology on $X$, called the
\textbf{metric topology} induced by $d$. A topological space $X$ is said to be
\textbf{metrizable} if there exists a metric $d$ inducing its topology.
A \textbf{metric space} is a metrizable space $X$ together with a metric $d$
that induces this topology.

A topological space $X$ is called \textbf{sequentially compact} if every sequence of points of $X$ has a convergent subsequence.

\begin{lemma}\cite[Theorem 28.2]{Munkres2000Topology}
    Let $X$ be metrizable space. Then, the following are equivalent:
    \begin{enumerate}[label=(\roman*)]
        \item $X$ is compact.
        \item $X$ is sequentially compact.
    \end{enumerate}
\end{lemma}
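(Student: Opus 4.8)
The plan is to establish the two implications separately, using throughout the fact that a metrizable space is first countable: fixing a metric $d$ that induces the topology, each point $x$ has the countable neighborhood basis $\{B_d(x,1/k)\}_{k\geq 1}$. This first-countability is what lets cluster points be promoted to limits of subsequences, and it is the only feature of metrizability needed for the direction (i) $\Rightarrow$ (ii).

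For (i) $\Rightarrow$ (ii), I would argue that in any compact space every sequence has a cluster point, and then upgrade this to a convergent subsequence. Concretely, given a sequence $(x_n)$, set $A_n=\overline{\{x_k: k\geq n\}}$. These are nested nonempty closed sets, so $\{A_n\}$ has the finite intersection property; compactness of $X$ then forces $\bigcap_n A_n\neq\emptyset$. Any point $x$ in this intersection is a cluster point, meaning every ball $B_d(x,1/k)$ meets the tail $\{x_m:m\geq n\}$ for all $n$. Choosing indices $n_1<n_2<\cdots$ with $x_{n_k}\in B_d(x,1/k)$ then produces a subsequence converging to $x$, which proves sequential compactness.

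For (ii) $\Rightarrow$ (i), the heart of the argument is two auxiliary facts about a sequentially compact metric space $X$. First, total boundedness: for each $\eps>0$ finitely many $\eps$-balls cover $X$. I would prove the contrapositive by a greedy construction — if no finite collection of $\eps$-balls covers $X$, inductively pick $x_{n+1}\notin\bigcup_{i\leq n}B_d(x_i,\eps)$, yielding a sequence with pairwise distances $\geq\eps$, which admits no Cauchy (hence no convergent) subsequence, contradicting (ii). Second, the Lebesgue number lemma: for every open cover $\mathcal{U}$ there is $\delta>0$ such that every subset of diameter $<\delta$ lies in some member of $\mathcal{U}$. Again I argue by contradiction: if no such $\delta$ works, pick for each $n$ a set $C_n$ of diameter $<1/n$ contained in no member of $\mathcal{U}$, together with a point $x_n\in C_n$; a convergent subsequence $x_{n_k}\to x$ with $x\in U\in\mathcal{U}$ and $B_d(x,\eps)\subseteq U$ then forces $C_{n_k}\subseteq U$ once $1/n_k$ and $d(x_{n_k},x)$ fall below $\eps/2$ (by the triangle inequality every point of $C_{n_k}$ then lies within $\eps$ of $x$), the desired contradiction.

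With these in hand the conclusion is immediate: given an open cover $\mathcal{U}$, take a Lebesgue number $\delta$, cover $X$ by finitely many balls $B_d(x_i,\delta/2)$ via total boundedness, observe each such ball has diameter $<\delta$ and is therefore contained in some $U_i\in\mathcal{U}$, and collect the finitely many $U_i$ into a finite subcover. The main obstacle I anticipate is the Lebesgue number lemma: both its correct formulation (controlling diameters of arbitrary subsets rather than radii of balls) and the careful choice of thresholds $1/n_k,\,d(x_{n_k},x)<\eps/2$ demand the most care, whereas total boundedness and the cluster-point argument are comparatively routine.
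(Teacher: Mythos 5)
The paper does not actually prove this lemma; it is quoted directly from the cited reference (Munkres, Theorem 28.2), and your argument is precisely the standard proof given there: cluster points via the finite intersection property upgraded to subsequential limits by first countability for (i) $\Rightarrow$ (ii), and total boundedness combined with the Lebesgue number lemma for (ii) $\Rightarrow$ (i). The argument is correct; the only detail to adjust is the final constant, since a ball $B_d(x_i,\delta/2)$ has diameter at most $\delta$ but not necessarily strictly less than $\delta$, so to invoke the Lebesgue number lemma exactly as you formulated it you should cover $X$ by balls of radius $\delta/3$ (as Munkres does).
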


\begin{definition}[Compactly generated space]
A topological space $X$ is called \emph{compactly generated} or a 
\emph{$k$-space} if it satisfies any of the following equivalent conditions:\cite{lawson1974quotients, Munkres2000Topology, willard2012general}
\begin{enumerate}
    \item The topology on $X$ is \emph{coherent} with the family of its compact
    subspaces; that is, a subset $A \subseteq X$ is open (resp.\ closed) in $X$
    if and only if $A \cap K$ is open (resp.\ closed) in $K$ for every compact
    subspace $K \subseteq X$.
    
    \item The topology on $X$ coincides with the \emph{final topology} with
    respect to the family of all continuous maps $f \colon K \to X$
    from compact spaces $K$.
    
    \item $X$ is a quotient space of a topological sum of compact spaces.
    
    \item $X$ is a quotient space of a weakly locally compact space.
\end{enumerate}
\end{definition}

\begin{proposition}\label{prop:metrizable_is_compactly_generated}
    Every metrizable space is compactly generated.
\end{proposition}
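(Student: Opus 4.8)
The plan is to verify the first characterization in the definition: I want to show that for a metrizable space $X$, a subset $A \subseteq X$ is closed whenever $A \cap K$ is closed in $K$ for every compact subspace $K \subseteq X$ (the converse is automatic, since the intersection of a closed set with any subspace is closed in the subspace topology). Because $X$ is metrizable, I can work with a fixed metric $d$ inducing the topology, and I can test closedness using sequences: in a metrizable space, $A$ is closed if and only if it is sequentially closed, i.e. the limit of every convergent sequence of points of $A$ again lies in $A$. This reduces the problem to a statement about convergent sequences, which is where compact subspaces enter naturally.

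The key step is the following observation. Suppose $A \cap K$ is closed in $K$ for every compact $K$, and let $(a_n)$ be a sequence in $A$ converging to some point $x \in X$. First I would build a compact subspace containing the relevant data: set $K := \{x\} \cup \{a_n : n \in \N\}$. I claim $K$ is compact. Indeed, any open cover of $K$ must contain an open set $U$ containing the limit $x$; since $a_n \to x$, all but finitely many $a_n$ lie in $U$, and the remaining finitely many points are covered by finitely many further members of the cover, yielding a finite subcover. Hence $K$ is compact.

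Now all the points $a_n$ lie in $A \cap K$, and $a_n \to x$ with $x \in K$. By hypothesis $A \cap K$ is closed in $K$, and since $K$ is metrizable (as a subspace of a metrizable space), closed subsets of $K$ are sequentially closed in $K$; therefore the limit $x$ of the sequence $(a_n) \subseteq A \cap K$ must belong to $A \cap K$, and in particular $x \in A$. This shows $A$ is sequentially closed in $X$, hence closed, completing the argument for closed sets; the statement for open sets follows by taking complements.

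The argument is essentially routine once the right compact set is chosen, so I do not expect a genuine obstacle. The one point requiring mild care is ensuring the sequential criterion is applied legitimately: both the equivalence \emph{closed $\iff$ sequentially closed} and the compactness of the convergent-sequence-with-limit set $K$ rely on metrizability, so I would make explicit that these facts are invoked for the metric space $X$ and its subspace $K$. I would also note in passing that the equivalence of the four conditions in the definition of a $k$-space is assumed, so verifying condition~(1) suffices to conclude that $X$ is compactly generated.
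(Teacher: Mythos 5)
Your proposal is correct and follows essentially the same route as the paper's proof: reduce closedness to sequential closedness via metrizability, form the compact set $K=\{x\}\cup\{a_n : n\in\N\}$ from a convergent sequence in $A$, and use the hypothesis that $A\cap K$ is closed in $K$ to conclude the limit lies in $A$. The only cosmetic difference is that the paper starts from a point $x\in\overline{A}$ and extracts a sequence, while you start directly from a convergent sequence in $A$; these are equivalent under the sequential criterion you both invoke.
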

\begin{proof}
Let $X$ be a metrizable space with metric $d$. 
The forward implication is immediate: if $A\subseteq X$ is closed and 
$K\subseteq X$ is compact, then $A\cap K$ is closed in $K$ (with the subspace topology).

Now we prove the converse. By \cite[Lemma~21.2]{Munkres2000Topology}, in a
metrizable space a subset $A\subseteq X$ is closed if and only if it contains the limits
of all convergent sequences in $A$. Suppose that $A\cap K$ is closed in $K$
for every compact $K\subseteq X$. Let $x\in\overline{A}$.
Since $X$ is metrizable, there exists a sequence $(a_n)_{n\ge1}\subseteq A$
converging to $x$.

Consider the subset
\[
K \;:=\; \{x\}\,\cup\,\{a_n \mid n\in\mathbb{N}\}\ \subseteq X.
\]
We claim that $K$ is compact. Indeed, let $\mathcal{U}$ be an open cover of
$K$. Choose $U\in\mathcal{U}$ with $x\in U$. Because $a_n\to x$, there exists
$N$ such that $a_n\in U$ for all $n\ge N$. The finitely many points
$a_1,\dots,a_{N-1}$ are covered by finitely many members of $\mathcal{U}$, so
$\mathcal{U}$ admits a finite subcover of $K$. Hence $K$ is compact.

By our assumption, $A\cap K$ is closed in $K$. The sequence $(a_n)$ lies in
$A\cap K$ and converges to $x\in K$, so (again by
\cite[Lemma~21.2]{Munkres2000Topology}) the limit point $x$ must belong to
$A\cap K$. Therefore $x\in A$. Since $x\in\overline{A}$ was arbitrary, we
conclude that $A$ is closed in $X$.

Thus, a subset of $X$ is closed if and only if its intersection with every compact
subspace is closed, which shows that $X$ is compactly generated.

\end{proof}

\subsection{Direct Limit Topology and Infinite Symmetric Product}

Before introducing the infinite symmetric product, we briefly recall the
direct limit topology, which will serve as a basic tool.
Let $\{X_{n}\}_{n \in \N}$ be an increasing sequence of topological spaces,
so that $X_{n} \subseteq X_{n+1}$ for every $n \in \N$.
The \textbf{direct limit topology} (also called the \textbf{final topology})
on the union
\[
X := \bigcup_{n \in \N} X_{n}
\]
is defined to be the finest topology on $X$ making all inclusions
$i_{n} \colon X_{n} \hookrightarrow X$ continuous.

\begin{proposition}\label{prop:dl-open-iff-intersections}
Let $\{X_{n}\}_{n \in \N}$ be as above and set $X := \bigcup_{n \in \N} X_{n}$.
A subset $U \subseteq X$ is open (with respect to the direct limit topology)
if and only if $U \cap X_{n}$ is open in $X_{n}$ for every $n \in \N$.
\end{proposition}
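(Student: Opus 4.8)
The plan is to reduce the statement to the universal characterization of the final topology. Concretely, I would introduce the candidate collection
\[
\tau := \{\, U \subseteq X \mid U \cap X_n \text{ is open in } X_n \text{ for every } n \in \N \,\},
\]
and prove three things: that $\tau$ is a topology on $X$, that each inclusion $i_n \colon X_n \hookrightarrow X$ is continuous when $X$ carries $\tau$, and that $\tau$ is the finest topology with this property. Once these are established, $\tau$ coincides with the direct limit topology by its very definition, and the claimed characterization is precisely the description of membership in $\tau$.

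For the first step I would verify the topology axioms using distributivity of intersection over the Boolean operations. Since $\emptyset \cap X_n = \emptyset$ and $X \cap X_n = X_n$ are open in $X_n$, both $\emptyset$ and $X$ lie in $\tau$. For a finite intersection, $(U \cap V) \cap X_n = (U \cap X_n) \cap (V \cap X_n)$ is open in $X_n$; for an arbitrary union, $\bigl(\bigcup_\alpha U_\alpha\bigr) \cap X_n = \bigcup_\alpha (U_\alpha \cap X_n)$ is open in $X_n$. Hence $\tau$ is closed under finite intersections and arbitrary unions, so it is a topology on $X$.

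For the second step, continuity of each $i_n$ is immediate: for any $U \in \tau$ one has $i_n^{-1}(U) = U \cap X_n$, which is open in $X_n$ exactly by the defining condition of $\tau$. The final step is the comparison establishing maximality. Let $\sigma$ be any topology on $X$ for which every inclusion $i_n$ is continuous, and take $U \in \sigma$. Continuity of $i_n$ then forces $i_n^{-1}(U) = U \cap X_n$ to be open in $X_n$ for every $n$, so $U \in \tau$; thus $\sigma \subseteq \tau$. This shows $\tau$ is the finest topology making all inclusions continuous, so $\tau$ is the direct limit topology.

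I do not expect a genuine obstacle here, since the argument is a formal unwinding of the definition of the final topology. The only points demanding care are keeping the quantifier ``for every $n \in \N$'' correctly placed throughout, and observing that the pivotal identity $i_n^{-1}(U) = U \cap X_n$ holds precisely because $i_n$ is an inclusion map. It is worth noting that the hypothesis $X_n \subseteq X_{n+1}$ is used only to make the union $X$ and the maps $i_n$ meaningful; the proof itself does not require the inclusions $X_n \hookrightarrow X_{n+1}$ to be topological embeddings.
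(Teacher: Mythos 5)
Your proposal is correct and follows essentially the same route as the paper's proof: both define the candidate collection of sets whose intersections with each $X_n$ are open, verify it is a topology, check that the inclusions are continuous, and show maximality among all such topologies. No gaps.
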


\begin{proof}
Let
\[
\mathcal{T}' \;:=\; \bigl\{\, U \subseteq X \ \big|\ 
U \cap X_{n} \text{ is open in } X_{n} \text{ for all } n \in \N \,\bigr\}.
\]
First, $\mathcal{T}'$ is a topology on $X$: clearly $\varnothing, X \in \mathcal{T}'$;
arbitrary unions are in $\mathcal{T}'$ because
\[
\Bigl(\bigcup_{\alpha} U_{\alpha}\Bigr) \cap X_{n}
= \bigcup_{\alpha} \bigl(U_{\alpha} \cap X_{n}\bigr),
\]
and finite intersections are in $\mathcal{T}'$ because
\[
\bigl(U \cap V\bigr)\cap X_{n}=(U\cap X_{n})\cap(V\cap X_{n}),
\]
which is open in $X_{n}$ if both $U\cap X_{n}$ and $V\cap X_{n}$ are open.

Next, each inclusion $i_{n}\colon X_{n}\hookrightarrow X$ is continuous for
$\mathcal{T}'$, since for any $U\in\mathcal{T}'$ we have
$i_{n}^{-1}(U)=U\cap X_{n}$ open in $X_{n}$ by definition.
Hence $\mathcal{T}'$ is a topology on $X$ that makes all inclusions continuous.

Finally, if $\mathcal{S}$ is any topology on $X$ making all inclusions $i_{n}$ continuous
and $U\in\mathcal{S}$, then $i_{n}^{-1}(U)=U\cap X_{n}$ is open in $X_{n}$ for every $n$,
so $U\in\mathcal{T}'$. Thus $\mathcal{S}\subseteq\mathcal{T}'$ for every such $\mathcal{S}$,
showing that $\mathcal{T}'$ is the finest topology making all inclusions continuous.
By definition of the direct limit topology, $\mathcal{T}'$ coincides with it,
which proves the characterization.
\end{proof}

\begin{remark}
Some authors take the condition “$U$ is open iff $U\cap X_{n}$ is open in $X_{n}$ for all $n$”
as the \emph{definition} of the direct limit topology~\cite{banakh2012direct, glockner2003direct}; Proposition~\ref{prop:dl-open-iff-intersections}
shows this is equivalent to the “finest topology making the inclusions continuous” viewpoint.
\end{remark}

With this characterization in hand, we now turn to the infinite symmetric product,
equipped with the direct limit topology.
We recall the pointed version of the definition of the infinite symmetric
product~\cite{dold1958quasifaserungen}.
Let $(X,e)$ be a pointed topological space with basepoint $e$.
For each $n \in \N$, the symmetric group $S_{n}$ acts on the product space
$X^{n}$ by coordinatewise permutation.
The quotient of this action is called the \textbf{$n$-th symmetric power}
of $X$ and is denoted by $\SP^{n}(X)$.
The orbit of $(x_{1},\dots,x_{n}) \in X^{n}$ under this action is written
as $[x_{1},\dots,x_{n}]$.

If $(X,e)$ is a based space, it is customary to set $\SP^{0}(X) := \{e\}$.
Moreover, $X^{n}$ embeds into $X^{n+1}$ by
\[
(x_{1},\dots,x_{n}) \longmapsto (x_{1},\dots,x_{n},e),
\]
which induces an embedding $\SP^{n}(X) \hookrightarrow \SP^{n+1}(X)$.
The \emph{infinite symmetric product} is then defined as the direct limit
\[
\SP(X) := \varinjlim_{n} \SP^{n}(X).
\]

Equivalently, $\SP(X)$ can be described without category-theoretic language
as the union of the increasing sequence
\[
\SP^{1}(X) \subseteq \SP^{2}(X) \subseteq \cdots,
\]
endowed with the direct limit topology from
Proposition~\ref{prop:dl-open-iff-intersections}.
In particular, a subset $U \subseteq \SP(X)$ is open if and only if
$U \cap \SP^{n}(X)$ is open in $\SP^{n}(X)$ for every $n \in \N$.
We take the basepoint of $\SP(X)$ to be $[e]$, so that $\SP(X)$ becomes a
based space as well.

\section{The Multiset Metric Space {$\N[X]$}}\label{sec:N[X]_met}
Let $(X,d)$ be a metric space with a distinguished basepoint $e \in X$.  
In this section we introduce a \emph{matching distance} on the free abelian
monoid $\SP(X)$ and show that the topology induced by this metric—independently
of the original direct--limit topology—endows $\SP(X)$ with the structure of a
topological monoid (Theorem~\ref{thm:N[X]_top_monoid}). We further prove that
the canonical inclusion $\SP^n(X) \hookrightarrow \N[X]$ is an isometric
embedding (Theorem~\ref{thm:SPn_iso_embed}), where $\N[X]$ denotes the set
$\SP(X)$ equipped with the topology induced by the matching distance
$d_{\N[X]}$. Finally, we identify conditions under which $\SP(X)$ is
homeomorphic to $\N[X]$ (Theorem~\ref{thm:SP(X)_topology_equivalence}).

\begin{definition}[Matching distance on {$\SP(X)$}]\label{def:SP(X)_metric}
For $[x_1,\dots,x_m]$ and $[y_1,\dots,y_n]$ in $\SP(X)$, choose
$N \ge m+n$ and form the padded lists
\[
\mathbf{x}_N := [x_1,\dots,x_m,\underbrace{e,\dots,e}_{N-m\text{ times}}],
\qquad
\mathbf{y}_N := [y_1,\dots,y_n,\underbrace{e,\dots,e}_{N-n\text{ times}}]
\;\in\; \SP^{N}(X),
\]
where $e$ is the basepoint of $X$. Define the \textbf{matching distance} $d_{\N[X]} : \SP(X) \times \SP(X) \longrightarrow \mathbb{R}_{\ge 0}$ by
\[
d_{\N[X]}([x_1,\dots,x_m],[y_1,\dots,y_n])
:= \inf_{N \ge m+n}\;
\min_{\sigma \in S_N}\;
\sum_{i=1}^{N} d\,\bigl(\mathbf{x}_N(i), \mathbf{y}_N(\sigma(i))\bigr),
\]
where $S_N$ is the symmetric group on $N$ letters, $d(\cdot,\cdot)$ is the
metric on $X$, and $\mathbf{x}_N(i)$ (resp.\ $\mathbf{y}_N(i)$) denotes the
$i$-th entry of the padded list $\mathbf{x}_N$ (resp.\ $\mathbf{y}_N$).
\end{definition}

We now verify that the function $d_{\N[X]}$ is in fact well-defined and that it defines a metric on the set $\SP(X)$.
\begin{proposition}\label{prop:SP(X)_metric}
    The function $d_{\N[X]}$ is well-defined metric on the set $\SP(X)$.
\end{proposition}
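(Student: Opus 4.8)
The plan is to collapse the infimum over $N$ into a single finite minimization via a stabilization lemma, and then to verify each metric axiom on that finite model. First I would show that
\[
f(N) := \min_{\sigma \in S_N}\ \sum_{i=1}^{N} d\bigl(\mathbf{x}_N(i), \mathbf{y}_N(\sigma(i))\bigr)
\]
is independent of $N$ for all $N \ge m+n$, so that the infimum in Definition~\ref{def:SP(X)_metric} is attained and equals $f(m+n)$. The inequality $f(N+1) \le f(N)$ is immediate: any optimal $\sigma \in S_N$ extends to $S_{N+1}$ by fixing the new coordinate, pairing the two appended basepoints at zero cost. The reverse inequality $f(N) \le f(N+1)$ is the technical heart. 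Given an optimal $\sigma' \in S_{N+1}$, if $\sigma'(N+1)=N+1$ I restrict to the first $N$ coordinates. Otherwise, writing $k = \sigma'(N+1)$ and $j = (\sigma')^{-1}(N+1)$, both coordinate-$(N+1)$ entries of the length-$(N+1)$ padded lists equal $e$, and I reroute $\sigma'$ to $\sigma \in S_N$ that sends $j \mapsto k$ and agrees with $\sigma'$ elsewhere. The triangle inequality in $X$,
\[
d\bigl(\mathbf{x}(j), \mathbf{y}(k)\bigr) \le d\bigl(\mathbf{x}(j), e\bigr) + d\bigl(e, \mathbf{y}(k)\bigr),
\]
shows the cost of $\sigma$ is at most that of $\sigma'$, namely $f(N+1)$, so $f(N) \le f(N+1)$.

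With stabilization in hand, well-definedness is straightforward. For each fixed $N$ the sum is minimized over all of $S_N$, so permuting the entries of either list—that is, choosing different ordered representatives of the same element of $\SP(X)$—merely precomposes or postcomposes $\sigma$ by a fixed permutation and leaves $f(N)$ unchanged; stabilization further gives independence of the chosen representation length, so $d_{\N[X]}$ descends to a well-defined function on $\SP(X) \times \SP(X)$.

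For the metric axioms, nonnegativity is immediate from $d \ge 0$, and symmetry follows by replacing an optimal $\sigma$ with $\sigma^{-1}$ and invoking the symmetry of $d$. For the identity of indiscernibles, $d_{\N[X]}(a,a)=0$ by matching each padded entry to itself; conversely, if the attained minimum is zero then every summand $d(\mathbf{x}_N(i), \mathbf{y}_N(\sigma(i)))$ vanishes, so $\mathbf{x}_N(i)=\mathbf{y}_N(\sigma(i))$ for all $i$ because $d$ is a genuine metric on $X$. Hence the padded lists agree as multisets and therefore represent the same element of $\SP(X)$, since basepoint padding is exactly the identification used in the direct limit.

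The triangle inequality is where stabilization pays off. Given $a=[x_i]$, $b=[y_j]$, $c=[z_k]$ of sizes $m,n,p$, I fix a single $N \ge m+n+p$ and pad all three to length $N$ (writing the padded lists $\mathbf{x},\mathbf{y},\mathbf{z}$), so that all three pairwise distances are computed by attained minima at this common $N$. Taking $\sigma$ optimal for $(a,b)$ and $\tau$ optimal for $(b,c)$, the composite $\tau\circ\sigma$ pairs $\mathbf{x}(i)$ with $\mathbf{z}(\tau\sigma(i))$, and the triangle inequality in $X$ applied termwise yields
\[
\sum_i d\bigl(\mathbf{x}(i), \mathbf{z}(\tau\sigma(i))\bigr) \le \sum_i d\bigl(\mathbf{x}(i), \mathbf{y}(\sigma(i))\bigr) + \sum_i d\bigl(\mathbf{y}(\sigma(i)), \mathbf{z}(\tau\sigma(i))\bigr).
\]
Reindexing the second sum by $j=\sigma(i)$ identifies it with $d_{\N[X]}(b,c)$, the first sum equals $d_{\N[X]}(a,b)$, and the left-hand side bounds $d_{\N[X]}(a,c)$ from above, giving $d_{\N[X]}(a,c) \le d_{\N[X]}(a,b) + d_{\N[X]}(b,c)$. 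I expect the main obstacle to be the stabilization lemma, specifically the rerouting step proving $f(N) \le f(N+1)$: it is the only place the triangle inequality in $X$ is needed to absorb surplus basepoint matchings, and it is precisely what licenses evaluating all three distances at one common $N$ in the argument above.
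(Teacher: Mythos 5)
Your proposal is correct, and its overall skeleton (reduce to a single finite minimization, then check the axioms by extending, inverting, and composing permutations) matches the paper's proof. The genuine difference is your stabilization lemma. The paper only proves the easy monotonicity $f(N+2)\le f(N)$ by extending a permutation to fix the appended basepoints, uses this to show the value of $d_{\N[X]}$ is independent of the chosen representative, and then asserts ``in particular'' that the infimum over $N$ is attained as $\min_{\sigma\in S_N}\sum_i d(\mathbf{x}_N(i),\mathbf{y}_N(\sigma(i)))$ for every $N\ge m+n$ --- a statement equivalent to $f$ being \emph{constant}, which does not follow from monotonicity alone (all tail infima of a non-increasing sequence coincide without the sequence being constant). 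Your rerouting argument for $f(N)\le f(N+1)$ --- replacing the two edges through the appended basepoints by the single edge $j\mapsto k$ and absorbing the cost via $d(\mathbf{x}(j),\mathbf{y}(k))\le d(\mathbf{x}(j),e)+d(e,\mathbf{y}(k))$ --- is exactly the missing ingredient, and it matters: the attainment of the infimum at a finite $N$ is load-bearing both for the implication $d_{\N[X]}(a,b)=0\Rightarrow a=b$ (one needs an actual optimal $\sigma$ with all summands zero) and for the triangle inequality (one needs optimal $\sigma$ and $\tau$ at a common level $N$). So your write-up is not merely a restyling; it supplies the justification the paper elides, and the remaining steps (symmetry via $\sigma^{-1}$, triangle inequality via $\tau\circ\sigma$ with the reindexing $j=\sigma(i)$) coincide with the paper's.
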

\begin{proof}
    \noindent\textbf{(i) well-definedness}
    By induction on the number of occurrences of the basepoint $e$, it suffices to
    show that for any
    \[
    [x_1,\ldots,x_m]=[x_1,\ldots,x_m,e]
    \quad\text{and}\quad
    [y_1,\ldots,y_n]=[y_1,\ldots,y_n,e]
    \quad\text{in }\SP(X),
    \]
    we have
    \[
    d_{\N[X]}([x_1,\ldots,x_m],[y_1,\ldots,y_n])
    =
    d_{\N[X]}([x_1,\ldots,x_m,e],[y_1,\ldots,y_n,e]).
    \]
    First, by the definition of $d_{\N[X]}$ (Definition~\ref{def:SP(X)_metric}), we
    clearly have
    \[
    d_{\N[X]}([x_1,\ldots,x_m],[y_1,\ldots,y_n])
    \;\le\;
    d_{\N[X]}([x_1,\ldots,x_m,e],[y_1,\ldots,y_n,e]).
    \]
    For the reverse inequality, fix $N \ge m+n$ and let
    \[
    \mathbf{x}_N = [x_1,\dots,x_m,\underbrace{e,\dots,e}_{N-m}],\qquad
    \mathbf{y}_N = [y_1,\dots,y_n,\underbrace{e,\dots,e}_{N-n}]
    \]
    be any padded representatives in $\SP^N(X)$. Adjoin one extra copy of $e$ to
    each side and consider the $(N+2)$–lists
    \[
    \mathbf{x}_{N+2} = [\mathbf{x}_N,e,e],\qquad
    \mathbf{y}_{N+2} = [\mathbf{y}_N,e,e].
    \]
    Given any $\sigma \in S_N$, define $\tau \in S_{N+2}$ by
    \[
    \tau(i) := 
    \begin{cases}
    \sigma(i), & 1 \le i \le N,\\
    N+1, & i = N+1,\\
    N+2, & i = N+2.
    \end{cases}
    \]
    Then $\tau \in S_{N+2}$ and extends $\sigma$ on the first $N$ entries.
    Consequently,
    \[
    \sum_{i=1}^{N+2} d\,\bigl(\mathbf{x}_{N+2}(i),\mathbf{y}_{N+2}(\tau(i))\bigr)
    =
    \sum_{i=1}^{N} d\,\bigl(\mathbf{x}_N(i),\mathbf{y}_N(\sigma(i))\bigr)
    + d(e,e) + d(e,e)
    =
    \sum_{i=1}^{N} d\,\bigl(\mathbf{x}_N(i),\mathbf{y}_N(\sigma(i))\bigr).
    \]
    Taking the minimum over $\sigma \in S_N$ and then the infimum over $N \ge m+n$
    gives
    \[
    d_{\N[X]}([x_1,\dots,x_m],[y_1,\dots,y_n])
    \;\ge\;
    d_{\N[X]}([x_1,\dots,x_m,e],[y_1,\dots,y_n,e]),
    \]
    as required. In particular, it follows that
    \[
    d_{\N[X]}([x_1,\dots,x_m],[y_1,\dots,y_n])
    = \min_{\sigma \in S_{N}}\;
    \sum_{i=1}^{N} d\,\bigl(\mathbf{x}_{N}(i), \mathbf{y}_{N}(\sigma(i))\bigr)
    \]
    for every $N \geq m + n$.
    
    \noindent\textbf{(ii) Nonnegativity} 
    For every $[x_1,\dots,x_m]$ and $[y_1,\ldots,y_n]$ in $\SP(X)$ and every $N \geq n + m$, the non-negativity of $d$ implies that 
    \[
    d_{\N[X]}([x_1,\dots,x_m],[y_1,\dots,y_n])
    = \min_{\sigma \in S_{n+m}}\;
    \sum_{i=1}^{n+m} d\,\bigl(\mathbf{x}_{n+m}(i), \mathbf{y}_{n+m}(\sigma(i))\bigr) \geq 0.
    \]
    Now observe that
    \[
    \sum_{i=1}^{m} d(x_i,x_i) + \sum_{i=1}^{m} d(x_i,x_i) = 0,
    \]
    so it follows immediately that
    \[
    d_{\N[X]}([x_1,\dots,x_m],[x_1,\dots,x_m]) = 0.
    \]
    
    Conversely, assume that
    \[
    d_{\N[X]}([x_1,\dots,x_m],[y_1,\dots,y_n]) = 0.
    \]
    Then there exists $\sigma \in S_{n+m}$ such that
    \[
    \sum_{i=1}^{n+m} d\,\bigl(\mathbf{x}_{n+m}(i),\mathbf{y}_{n+m}(\sigma(i))\bigr) = 0.
    \]
    Since each summand is nonnegative, we must have
    \[
    \mathbf{x}_{n+m}(i) = \mathbf{y}_{n+m}(\sigma(i))
    \qquad \text{for all } i=1,\dots,n+m.
    \]
    Hence, after possibly reordering and padding with copies of the basepoint $e$,
    \[
    [x_1,\dots,x_m]
    = [x_1,\dots,x_m,\underbrace{e,\dots,e}_{n \text{ times}}]
    = [y_1,\dots,y_n,\underbrace{e,\dots,e}_{m \text{ times}}]
    = [y_1,\dots,y_n],
    \]
    as required.

    \noindent\textbf{(iii) Symmetry}
        For every $[x_1,\dots,x_m]$ and $[y_1,\ldots,y_n]$ in $\SP(X)$ and every $\sigma \in S_{m+n}$ we have
    \[
    \sum_{i=1}^{m+n} d\,\bigl(\mathbf{x}_{m+n}(i),\mathbf{y}_{m+n}(\sigma(i))\bigr)
    = 
    \sum_{i=1}^{m+n} d\,\bigl(\mathbf{y}_{m+n}(i),\mathbf{x}_{m+n}(\sigma^{-1}(i))\bigr),
    \]
    where $\sigma^{-1}$ denotes the inverse permutation of $\sigma$.
    Taking the minimum over $\sigma \in S_{m+n}$ on both sides shows that
    \[
    d_{\N[X]}([x_1,\dots,x_m],[y_1,\dots,y_n])
    =
    d_{\N[X]}([y_1,\dots,y_n],[x_1,\dots,x_m]),
    \]
    so $d_{\N[X]}$ is symmetric.

    \noindent\textbf{(iv) Triangle inequality}
        Pick arbitrary elements $[x_1,\dots,x_m]$, $[y_1,\ldots,y_n]$, and
    $[z_1,\ldots,z_k]$ of $\SP(X)$ and set $N := m+n+k$.
    Choose $\sigma,\tau \in S_N$ such that
    \[
    d_{\N[X]}([x_1,\ldots,x_m],[y_1,\ldots,y_n])
    = \sum_{i=1}^{N} d\,\bigl(\mathbf{x}_N(i),\mathbf{y}_N(\sigma(i))\bigr)
    \]
    and
    \[
    d_{\N[X]}([y_1,\ldots,y_n],[z_1,\ldots,z_k])
    = \sum_{i=1}^{N} d\,\bigl(\mathbf{y}_N(i),\mathbf{z}_N(\tau(i))\bigr).
    \]
    Consider the composition $\pi := \tau \circ \sigma \in S_N$.
    By the triangle inequality in $X$,
    \[
    d\,\bigl(\mathbf{x}_N(i),\mathbf{z}_N(\pi(i))\bigr)
    \;\le\;
    d\,\bigl(\mathbf{x}_N(i),\mathbf{y}_N(\sigma(i))\bigr)
    + d\,\bigl(\mathbf{y}_N(\sigma(i)),\mathbf{z}_N(\pi(i))\bigr).
    \]
    Summing over $i=1,\dots,N$ and using that $\pi \in S_N$, we obtain
    \begin{align*}
    d_{\N[X]}([x_1,\ldots,x_m],[z_1,\ldots,z_k])
    &\le \sum_{i=1}^{N} d\,\bigl(\mathbf{x}_N(i),\mathbf{z}_N(\pi(i))\bigr) \\
    &\le \sum_{i=1}^{N} d\,\bigl(\mathbf{x}_N(i),\mathbf{y}_N(\sigma(i))\bigr)
    + \sum_{i=1}^{N} d\,\bigl(\mathbf{y}_N(\sigma(i)),\mathbf{z}_N(\pi(i))\bigr) \\
    &= d_{\N[X]}([x_1,\ldots,x_m],[y_1,\ldots,y_n])
    + d_{\N[X]}([y_1,\ldots,y_n],[z_1,\ldots,z_k]).
    \end{align*}
    Since $[x_1,\dots,x_m]$, $[y_1,\dots,y_n]$, and $[z_1,\dots,z_k]$ were
    arbitrary, this shows that $d_{\N[X]}$ satisfies the triangle inequality and is
    therefore a well-defined metric on the set $\SP(X)$.
\end{proof}

From now on, we shall denote the metric space $(\SP(X), d_{\N[X]})$ by
$\N[X]$, and refer to it as the \textbf{multiset space of $X$}.

\begin{remark}\label{remark:d_SP_suff_condition}
    By the proof of Proposition~\ref{prop:SP(X)_metric}, we observe that for every
    $[x_1,\ldots,x_m]$ and $[y_1,\ldots,y_n]$ we have
    \[
    d_{\N[X]}([x_1,\dots,x_m],[y_1,\dots,y_n])
    =
    \min_{\sigma \in S_N}
    \sum_{i=1}^{N} d\,\bigl(\mathbf{x}_N(i),\mathbf{y}_N(\sigma(i))\bigr)
    \qquad\text{for every } N \ge \max(m,n).
    \]
\end{remark}

We now show that $\N[X]$ endows the
structure of an abelian topological monoid.

\begin{theorem}[Abelian Topological Monoid Structure of {$\N[X]$}]
\label{thm:N[X]_top_monoid}
Let $(X,e)$ be a pointed metric space.  
Then the metric $d_{\N[X]}$ introduced in
Definition~\ref{def:SP(X)_metric} induces on $\N[X]$ the structure of an
abelian topological monoid.
\end{theorem}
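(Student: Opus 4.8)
The plan is to establish two separate things: that concatenation of multisets gives $\SP(X)$ the algebraic structure of an abelian monoid, and that this operation is jointly continuous for the metric topology induced by $d_{\N[X]}$. Since $\N[X]$ is already a metric space by Proposition~\ref{prop:SP(X)_metric}, the topological content reduces entirely to continuity of addition; there is no inversion to worry about, as we only claim a monoid.

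For the algebraic part, recall that addition on $\SP(X)$ is given by concatenation, $[x_1,\dots,x_m]+[y_1,\dots,y_n]=[x_1,\dots,x_m,y_1,\dots,y_n]$. Commutativity and associativity are immediate: both sides of each identity name the same multiset, since the bracket notation already quotients by permutation and concatenation is independent of the order in which lists are joined. The basepoint class $[e]$ is the identity element, because the well-definedness established in Proposition~\ref{prop:SP(X)_metric} identifies $[x_1,\dots,x_m,e]$ with $[x_1,\dots,x_m]$; the same identification shows concatenation is well defined on classes. Thus the only substantive point is continuity.

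The heart of the argument is the sub-additivity estimate
\[
d_{\N[X]}(A+B,\,A'+B') \;\le\; d_{\N[X]}(A,A') + d_{\N[X]}(B,B')
\]
for all $A,A',B,B'\in\SP(X)$. To prove it, I would pad $A,A'$ to a common length $p\ge\max(|A|,|A'|)$ and $B,B'$ to a common length $q\ge\max(|B|,|B'|)$, and invoke Remark~\ref{remark:d_SP_suff_condition} to pick optimal permutations $\sigma_1\in S_p$ and $\sigma_2\in S_q$ realizing $d_{\N[X]}(A,A')$ and $d_{\N[X]}(B,B')$. Concatenating the padded lists represents $A+B$ and $A'+B'$ as lists of length $p+q$, and I would define the block permutation $\pi\in S_{p+q}$ acting as $\sigma_1$ on the first $p$ coordinates and as $\sigma_2$, shifted by $p$, on the last $q$. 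Since $|A+B|=|A|+|B|$ and $|A'+B'|=|A'|+|B'|$, one checks $p+q\ge\max(|A+B|,|A'+B'|)$, so by Remark~\ref{remark:d_SP_suff_condition} the matching cost of $\pi$ is an upper bound for $d_{\N[X]}(A+B,A'+B')$. That cost splits over the two blocks into $d_{\N[X]}(A,A')+d_{\N[X]}(B,B')$, yielding the estimate.

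With the estimate in hand, continuity is routine: equipping $\N[X]\times\N[X]$ with the sum metric and taking $\delta=\eps/2$, whenever $d_{\N[X]}(A,A')<\delta$ and $d_{\N[X]}(B,B')<\delta$ we get $d_{\N[X]}(A+B,A'+B')<\eps$, so addition is jointly continuous. The main obstacle is the sub-additivity estimate, and within it the point demanding care is the bookkeeping of padding lengths: one must verify that the block permutation matches the concatenated padded lists entry-for-entry and that $p+q$ is a legitimate common length for computing $d_{\N[X]}(A+B,A'+B')$, which is precisely what Remark~\ref{remark:d_SP_suff_condition} guarantees.
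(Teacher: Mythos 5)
Your proposal is correct and follows essentially the same route as the paper: establish the sub-additivity estimate $d_{\N[X]}(A+B,A'+B')\le d_{\N[X]}(A,A')+d_{\N[X]}(B,B')$ via a block permutation on concatenated padded lists, conclude that addition is $1$-Lipschitz for the sum metric on the product, and note that associativity, commutativity, and the identity $[e]$ are immediate. In fact your write-up supplies the padding-length bookkeeping and the explicit block permutation that the paper's proof leaves implicit when it asserts the inequality "by taking the infimum over all permutations."
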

\begin{proof}
    Define
    \[
    +\;:\; \SP(X) \times \SP(X) \longrightarrow \SP(X)
    \qquad\text{by}\qquad
    [x_1,\ldots,x_m] \times [y_1,\ldots,y_n]
    \;\longmapsto\;
    [x_1,\ldots,x_m,y_1,\ldots,y_n].
    \]
    Since $\SP(X)$ is a metric space, the product $\SP(X) \times \SP(X)$ is
    equipped with the canonical product metric
    \[
    \rho\!\bigl((\mathbf{x},\mathbf{y}),(\mathbf{x}',\mathbf{y}')\bigr)
    \;:=\;
    d_{\N[X]}(\mathbf{x},\mathbf{x}') \;+\; d_{\N[X]}(\mathbf{y},\mathbf{y}').
    \]
    By the definition of $d_{\N[X]}$, taking the infimum over all permutations yields
    \[
    d_{\N[X]}(\mathbf{x},\mathbf{x}')
    \;+\;
    d_{\N[X]}(\mathbf{y},\mathbf{y}')
    \;\geq\;
    d_{\N[X]}(\mathbf{x}+\mathbf{y},\,\mathbf{x}'+\mathbf{y}').
    \]
    Hence the map $+$ is $1$-Lipschitz with respect to $\rho$, and in particular
    it is continuous. 

    It is immediate from the definition that $+$ is associative and commutative.
    Moreover, the singleton multiset $[e]$ acts as a two--sided identity:
    \[
    \mathbf{x} + [e] = \mathbf{x} = [e] + \mathbf{x}
    \qquad
    \text{for all } \mathbf{x} \in \SP(X).
    \]
    Therefore, $\bigl(\SP(X),+\bigr)$ is an abelian topological monoid.
\end{proof}

In contrast to the case of $\SP(X)$, we shall prove that each $\SP^n(X)$ is
metrizable and admits an isometric embedding into $\N[X]$.

\begin{theorem}\label{thm:SPn_iso_embed}
For each $n \in \N$, the space $\SP^{n}(X)$ admits a metric $d_n$ which
induces an isometric embedding into $\N[X]$.
\end{theorem}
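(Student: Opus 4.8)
The plan is to equip $\SP^{n}(X)$ with the fixed-length matching metric and to recognise it as the restriction of $d_{\N[X]}$ along the canonical inclusion. Concretely, for $[x_1,\dots,x_n]$ and $[y_1,\dots,y_n]$ in $\SP^{n}(X)$ I would set
\[
d_n([x_1,\dots,x_n],[y_1,\dots,y_n])
:= \min_{\sigma \in S_n}\;\sum_{i=1}^{n} d\bigl(x_i,\,y_{\sigma(i)}\bigr).
\]
Since the minimum ranges over all of $S_n$, relabelling either tuple by a permutation leaves the value unchanged, so $d_n$ descends to a well-defined function on the quotient $\SP^{n}(X)=X^{n}/S_n$. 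The whole theorem then reduces to identifying $d_n$ with $d_{\N[X]}$ on the image of $\SP^n(X)$.

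The key step is to show that the inclusion $\iota\colon \SP^{n}(X)\hookrightarrow \SP(X)=\N[X]$ is distance-preserving, i.e.\ $d_{\N[X]}(\iota(\mathbf{x}),\iota(\mathbf{y}))=d_n(\mathbf{x},\mathbf{y})$ for all $\mathbf{x},\mathbf{y}\in\SP^{n}(X)$. Representing both $\mathbf{x}$ and $\mathbf{y}$ by $n$-tuples (which may themselves contain copies of the basepoint $e$), I would apply Remark~\ref{remark:d_SP_suff_condition} with the common padding length $N=\max(n,n)=n$. At this length no extra padding is needed, so $\mathbf{x}_n=[x_1,\dots,x_n]$ and $\mathbf{y}_n=[y_1,\dots,y_n]$, and the remark gives
\[
d_{\N[X]}([x_1,\dots,x_n],[y_1,\dots,y_n])
=\min_{\sigma\in S_n}\sum_{i=1}^{n} d\bigl(x_i,y_{\sigma(i)}\bigr)
= d_n([x_1,\dots,x_n],[y_1,\dots,y_n]),
\]
which is exactly the desired identity.

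It then remains to check that $\iota$ is injective and that $d_n$ is a genuine metric. For injectivity, if two $n$-multisets agree in $\SP(X)$ then, after deleting basepoint entries, they have identical non-basepoint parts; since each has exactly $n$ entries in total, they also carry the same number of basepoints, and hence coincide already in $\SP^{n}(X)$. Granting injectivity, all metric axioms for $d_n$ — nonnegativity, symmetry, the triangle inequality, and $d_n(\mathbf{x},\mathbf{y})=0\iff \mathbf{x}=\mathbf{y}$ — follow immediately by pulling them back from the metric $d_{\N[X]}$ (Proposition~\ref{prop:SP(X)_metric}) along the distance-preserving injection $\iota$. A distance-preserving map between metric spaces is automatically a topological embedding, so $(\SP^{n}(X),d_n)$ embeds isometrically into $\N[X]$, completing the argument.

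I expect the only delicate point to be the bookkeeping around representatives: one must ensure that computing the matching value from length-$n$ representatives (possibly padded with basepoints) yields the same number as the value $d_{\N[X]}$ attaches to the corresponding reduced elements of $\N[X]$. This is precisely what Remark~\ref{remark:d_SP_suff_condition} secures, by permitting the common length $N$ to be taken as small as $\max(m,n)$ rather than $m+n$; everything else is routine.
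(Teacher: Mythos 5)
Your formula for $d_n$, the use of Remark~\ref{remark:d_SP_suff_condition} with $N=n$ to identify $d_n$ with the restriction of $d_{\N[X]}$, and the pullback of the metric axioms along the distance-preserving injection all match the paper's argument and are correct as far as they go. However, there is a genuine omission: you never show that $d_n$ induces the topology that $\SP^{n}(X)$ already carries, namely the quotient topology on $X^{n}/S_n$. In the paper, ``$\SP^{n}(X)$ admits a metric'' means a metric compatible with that given topology, and an \emph{isometric embedding} of the topological space $\SP^{n}(X)$ into $\N[X]$ requires precisely this compatibility; your argument only produces a metric on the underlying set and shows the inclusion preserves distances, which would equally well ``work'' for a metric inducing some other topology.

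This missing step is the substantive content of the paper's Lemma~\ref{lem:top_equal_SPn}. The easy direction is that the quotient map $q\colon X^{n}\to(\SP^{n}(X),d_n)$ is $1$-Lipschitz, so the $d_n$-topology is coarser than the quotient topology. The nontrivial direction --- every quotient-open set is $d_n$-open --- uses that the fiber of a point $[x_1,\dots,x_n]$ consists of the finitely many points $\{\sigma\cdot\mathbf{x}\mid\sigma\in S_n\}$, that one can choose $\varepsilon>0$ with $B_{X^n}(\mathbf{x},\varepsilon)\subseteq q^{-1}(U)$ and with the balls around distinct fiber points disjoint, and that $S_n$ acts on $X^{n}$ by isometries so that $\sigma\cdot B_{X^n}(\mathbf{x},\varepsilon)=B_{X^n}(\sigma\cdot\mathbf{x},\varepsilon)$ (Lemma~\ref{lem:isometric_action_balls}), whence $q$ of the union of these balls is exactly the $d_n$-ball $B_{d_n}([x_1,\dots,x_n],\varepsilon)\subseteq U$. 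Without this, the theorem as used later breaks down: for instance, the proof of Theorem~\ref{thm:SP(X)_topology_equivalence} passes from ``$U\cap\SP^{n}(X)$ is open in $\SP^{n}(X)$'' to the existence of a $d_n$-ball inside it, which is exactly the direction you have not justified. You should either prove this topology coincidence or restate your conclusion as being only about the set $\SP^{n}(X)$ equipped with the $d_n$-topology.
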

Before proving the theorem, we introduce some useful lemmas.

\begin{lemma}[Isometric actions send open balls to open balls]\label{lem:isometric_action_balls}
Let $(M,d)$ be a metric space and let a group $G$ act on $M$ by isometries;
that is, for each $g\in G$ the map $g\colon M\to M$ is bijective and
$d\bigl(gx,gy\bigr)=d(x,y)$ for all $x,y\in M$. Then for every $x\in M$,
$r>0$, and $g\in G$,
\[
g\bigl(B(x,r)\bigr)\;=\;B(gx,r),
\]
where $B(x,r):=\{y\in M: d(x,y)<r\}$.
\end{lemma}

\begin{proof}
Fix $x\in M$, $r>0$, and $g\in G$. 

\smallskip\noindent
($\subseteq$) If $y\in B(x,r)$, then $d(x,y)<r$. By the isometry property,
$d(gx,gy)=d(x,y)<r$, hence $gy\in B(gx,r)$. Thus $g\bigl(B(x,r)\bigr)
\subseteq B(gx,r)$.

\smallskip\noindent
($\supseteq$) Let $z\in B(gx,r)$, so $d(gx,z)<r$. Since $g$ is bijective, there
exists $y:=g^{-1}z\in M$. Using that $g^{-1}$ is also an isometry,
\[
d(x,y)\;=\;d\bigl(g^{-1}gx,g^{-1}z\bigr)\;=\;d(x,g^{-1}z)\;=\;d(gx,z)\;<\;r,
\]
so $y\in B(x,r)$ and hence $z=gy\in g\bigl(B(x,r)\bigr)$. Therefore
$B(gx,r)\subseteq g\bigl(B(x,r)\bigr)$.

Combining the two inclusions yields $g\bigl(B(x,r)\bigr)=B(gx,r)$.
\end{proof}

\begin{lemma}\label{lem:top_equal_SPn}
For each $n \in \N$, the topology on $\SP^{n}(X)$ coincides with the topology
induced by the metric
\[
d_{n} \colon \SP^{n}(X) \times \SP^{n}(X) \longrightarrow \R_{\ge 0},
\qquad
d_{n}([x_1,\ldots,x_n],[y_1,\ldots,y_n])
:= \min_{\sigma \in S_n}
\sum_{i=1}^{n} d(x_i,y_{\sigma(i)}).
\]
\end{lemma}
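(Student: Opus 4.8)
The plan is to present $\SP^{n}(X)$ as the orbit space $X^{n}/S_{n}$ with quotient map $q\colon X^{n}\to\SP^{n}(X)$, $q(x_1,\dots,x_n)=[x_1,\dots,x_n]$, and to compare the quotient topology $\tau_q$ that $\SP^{n}(X)$ carries by definition with the metric topology $\tau_m$ induced by $d_n$. First I would equip $X^{n}$ with the $\ell^1$ product metric $D(\mathbf{x},\mathbf{y}):=\sum_{i=1}^{n}d(x_i,y_i)$, which induces the usual product topology, and note that the coordinate-permutation action of $S_n$ on $(X^{n},D)$ is by isometries, so that Lemma~\ref{lem:isometric_action_balls} applies to every $D$-ball. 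I would also record that $d_n$ is genuinely a metric with no extra work: applying Remark~\ref{remark:d_SP_suff_condition} to two classes of size $n$ with $N=n$ shows that $d_n$ is exactly the restriction of $d_{\N[X]}$ to $\SP^{n}(X)$, hence a metric by Proposition~\ref{prop:SP(X)_metric}.

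The inclusion $\tau_m\subseteq\tau_q$ is the easy half. Choosing $\sigma=\id$ in the minimum defining $d_n$ gives $d_n\bigl(q(\mathbf{x}),q(\mathbf{y})\bigr)\le D(\mathbf{x},\mathbf{y})$, so $q\colon(X^{n},D)\to(\SP^{n}(X),\tau_m)$ is $1$-Lipschitz, hence continuous. Therefore $q^{-1}(U)$ is open in $X^{n}$ for every $d_n$-open $U$, which is exactly the condition for $U$ to belong to the quotient topology $\tau_q$.

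For the reverse inclusion $\tau_q\subseteq\tau_m$ the key is to prove that $q$ is an \emph{open} map into $(\SP^{n}(X),\tau_m)$: granting this, every $V\in\tau_q$ has open preimage $q^{-1}(V)$, so $V=q\bigl(q^{-1}(V)\bigr)$ is $d_n$-open by surjectivity of $q$. The computation I would carry out is the saturation identity
\[
q^{-1}\bigl(B_{d_n}([y],\eps)\bigr)=\bigcup_{\sigma\in S_n}\sigma\bigl(B_D(\mathbf{y},\eps)\bigr)=\bigcup_{\sigma\in S_n}B_D(\sigma\mathbf{y},\eps),
\]
where $[y]=q(\mathbf{y})$ and the second equality is exactly Lemma~\ref{lem:isometric_action_balls}. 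Since $q\circ\sigma=q$ for every $\sigma\in S_n$, all the images $q\bigl(B_D(\sigma\mathbf{y},\eps)\bigr)$ coincide with $q\bigl(B_D(\mathbf{y},\eps)\bigr)$; applying $q$ to the saturation identity and using surjectivity then yields the ball-correspondence $q\bigl(B_D(\mathbf{y},\eps)\bigr)=B_{d_n}([y],\eps)$. Writing an arbitrary open $U\subseteq X^{n}$ as a union of $D$-balls, its image $q(U)$ is a union of $d_n$-balls and hence $d_n$-open, so $q$ is open and the two inclusions combine to give $\tau_q=\tau_m$.

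The main obstacle is precisely the openness of $q$, equivalently the ball-correspondence $q\bigl(B_D(\mathbf{y},\eps)\bigr)=B_{d_n}([y],\eps)$: the continuity direction is immediate from the $1$-Lipschitz estimate, but translating between metric balls downstairs in $\SP^{n}(X)$ and $D$-balls upstairs in $X^{n}$ requires controlling the minimization over $S_n$ in the definition of $d_n$, and it is here that the isometric-action Lemma~\ref{lem:isometric_action_balls} does the essential bookkeeping by identifying the $\sigma$-translate of a ball with a ball.
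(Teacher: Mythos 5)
Your proof is correct and follows essentially the same route as the paper: the $1$-Lipschitz estimate for $q$ gives $\tau_m\subseteq\tau_q$, and the reverse inclusion rests on the same ball correspondence $q\bigl(B_D(\mathbf{y},\eps)\bigr)=B_{d_n}\bigl([y],\eps\bigr)$ coming from the isometric $S_n$-action via Lemma~\ref{lem:isometric_action_balls}. Your packaging of the second half as openness of $q$ through the saturation identity is if anything slightly cleaner, since it dispenses with the paper's (unneeded) choice of $\eps$ making the balls around the finitely many fiber points pairwise disjoint.
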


\begin{proof}[Proof of Lemma~\ref{lem:top_equal_SPn}]
By the same reasoning as in the proof of Proposition~\ref{prop:SP(X)_metric},
one checks that $d_n$ is a well-defined metric on the set $\SP^{n}(X)$:
it is symmetric, nonnegative, vanishes exactly on the diagonal, and satisfies
the triangle inequality.

It remains to show that the topology induced by $d_n$ coincides with the
quotient topology on $\SP^{n}(X)$.

Let $q \colon X^{n} \to \SP^{n}(X)$ be the canonical quotient map. Note that
$X^{n}$ is a metric space with the product metric
\[
d_{X^n}\bigl((x_1,\dots,x_n),(y_1,\dots,y_n)\bigr)
:= \sum_{i=1}^{n} d(x_i,y_i).
\]
By definition of $d_n$, the map
$q$ is $1$-Lipschitz when $\SP^{n}(X)$ is equipped with $d_n$:
\[
d_n\!\bigl(q(\mathbf{x}),q(\mathbf{y})\bigr)
\le d_{\mathrm{X^n}}(\mathbf{x},\mathbf{y}).
\]
Hence $q$ is continuous as a map from $(X^{n},d_{X^n})$ to
$(\SP^{n}(X),d_{n})$. By the universal property of the quotient topology,
this shows that the $d_n$-topology is coarser than the quotient topology.

For the converse inclusion, we first show that $S_n$ acts isometrically on $X^n$.
Indeed, for every $(x_1,\ldots,x_n)$ and $(y_1,\ldots,y_n)$ in $X^n$ and every
$\sigma \in S_n$,
\[
d_{X^n}\bigl((x_1,\ldots,x_n),(y_1,\ldots,y_n)\bigr)
= \sum_{i=1}^{n} d(x_i,y_i)
= \sum_{i=1}^{n} d(x_{\sigma(i)},y_{\sigma(i)})
= d_{X^n}\bigl(\sigma\cdot(x_1,\ldots,x_n),\sigma\cdot(y_1,\ldots,y_n)\bigr).
\]

Let $U \subseteq \SP^{n}(X)$ be any nonempty open set in the quotient topology.
Then $q^{-1}(U)$ is open in $X^{n}$ by definition of the quotient topology.
Pick any $[x_1,\dots,x_n] \in U$ and choose a representative
$\mathbf{x} \in q^{-1}([x_1,\dots,x_n])$. Since $S_n$ is finite, the fiber
$q^{-1}([x_1,\dots,x_n])$ consists of finitely many points
$\{\sigma \cdot \mathbf{x} \mid \sigma \in S_n\}$.
Because $X^n$ is Hausdorff, there exists $\varepsilon>0$ such that
\[
B_{X^n}(\sigma\cdot\mathbf{x},\varepsilon)
\cap
B_{X^n}(\tau\cdot\mathbf{x},\varepsilon)
=\varnothing
\qquad
\text{for all distinct }\sigma,\tau\in S_n,
\]
and, moreover, $B_{X^n}(\mathbf{x},\varepsilon)\subseteq q^{-1}(U)$.
Since $S_n$ acts isometrically on $X^n$, by Lemma~\ref{lem:isometric_action_balls}, we have
\[
\sigma \cdot B_{X^n}(\mathbf{x},\varepsilon)
= B_{X^n}(\sigma\cdot\mathbf{x},\varepsilon)
\qquad\text{for all }\sigma \in S_n.
\]
Hence
\[
q\!\left(\bigcup_{\sigma \in S_n} B_{X^n}(\sigma\cdot\mathbf{x},\varepsilon)\right)
= B_{d_n}\bigl([x_1,\dots,x_n],\varepsilon\bigr)
\subseteq U.
\]

This shows that every $[x_1,\dots,x_n] \in U$ admits a $d_n$–ball contained in
$U$, so $U$ is open in the $d_n$–topology. Therefore, the quotient topology is
contained in the $d_n$–topology.

Combining both inclusions, we conclude that the two topologies coincide.
\end{proof}

Now, we are ready to prove Theorem~\ref{thm:SPn_iso_embed}.
\begin{proof}[Proof of Theorem~\ref{thm:SPn_iso_embed}]
By the definition of $\N[X]$, there is a canonical inclusion of sets
\[
\iota \colon \SP^n(X) \longrightarrow \N[X].
\]
Moreover, by the proof of Lemma~\ref{lem:top_equal_SPn} and
Remark~\ref{remark:d_SP_suff_condition}, the metric $d_n$ on $\SP^n(X)$
coincides with the restriction of $d_{\N[X]}$. Hence, $\iota$ is an
isometric embedding.
\end{proof}

At this point, one may wonder why we work with $\N[X]$ instead of $\SP(X)$.  
The reason is that $\SP(X)$ is not metrizable in general.  
We next investigate sufficient conditions under which $\SP(X)$ is
homeomorphic to $\N[X]$. In addition, we explain why $\SP(X)$ typically fails
to be metrizable with example (Example~\ref{example:non_met}).

\begin{theorem}[Topology Equivalence]\label{thm:SP(X)_topology_equivalence}
Assume that the basepoint $e \in X$ is isolated. Then the metric $d_{\N[X]}$
induces on $\SP(X)$ the same topology as the direct--limit topology.
\end{theorem}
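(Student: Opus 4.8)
The plan is to prove the two topologies contain one another. Write $\mathcal{T}_d$ for the topology induced by $d_{\N[X]}$ and $\mathcal{T}_{\mathrm{DL}}$ for the direct--limit topology. The inclusion $\mathcal{T}_d \subseteq \mathcal{T}_{\mathrm{DL}}$ holds without any hypothesis on $e$: if $U \in \mathcal{T}_d$, then for each $n$ the intersection $U \cap \SP^n(X)$ is open in the subspace topology inherited from $(\SP(X),d_{\N[X]})$. Since the inclusion $\SP^n(X)\hookrightarrow\N[X]$ is an isometric embedding (Theorem~\ref{thm:SPn_iso_embed}), this subspace topology is the $d_n$–topology, which by Lemma~\ref{lem:top_equal_SPn} is precisely the original topology on $\SP^n(X)$. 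Hence $U \cap \SP^n(X)$ is open in $\SP^n(X)$ for every $n$, so $U \in \mathcal{T}_{\mathrm{DL}}$ by Proposition~\ref{prop:dl-open-iff-intersections}.

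The substance lies in the reverse inclusion $\mathcal{T}_{\mathrm{DL}} \subseteq \mathcal{T}_d$, and this is where isolation of $e$ enters. For $p \in \SP(X)$ let $\ell(p)$ denote the number of entries different from $e$, counted with multiplicity, in the reduced representative (well-defined since $[x_1,\dots,x_m]=[x_1,\dots,x_m,e]$). The key estimate I would establish is that, if $e$ is isolated, say $d(e,x)\ge\delta$ for all $x\ne e$, then
\[
d_{\N[X]}(p,q)\;\ge\;\bigl|\ell(p)-\ell(q)\bigr|\,\delta
\qquad\text{for all } p,q\in\SP(X).
\]
This follows from a pigeonhole count on matchings: writing $p$ and $q$ in reduced form and padding both to a common length $N$, any bijection must match at least $|\ell(p)-\ell(q)|$ of the non-basepoint entries of the longer list against padded copies of $e$, and each such pair costs at least $\delta$. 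The estimate persists after minimizing over $\sigma\in S_N$ and taking the infimum over $N$.

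With this estimate in hand I would localize. Fix a $\mathcal{T}_{\mathrm{DL}}$–open set $U$ and a point $p\in U$ with $m:=\ell(p)$. The estimate shows that for every $r\le\delta$ the open ball $B_{d_{\N[X]}}(p,r)$ consists only of points $q$ with $\ell(q)=m$, and therefore $B_{d_{\N[X]}}(p,r)\subseteq\SP^m(X)$. On the other hand, since $U\in\mathcal{T}_{\mathrm{DL}}$, the set $U\cap\SP^m(X)$ is open in $\SP^m(X)$; because the original topology on $\SP^m(X)$ agrees with the metric $d_m=d_{\N[X]}|_{\SP^m(X)}$ (Lemma~\ref{lem:top_equal_SPn}, Theorem~\ref{thm:SPn_iso_embed}), there is $r_1>0$ with $B_{d_{\N[X]}}(p,r_1)\cap\SP^m(X)\subseteq U$. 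Taking $r=\min(r_1,\delta)>0$ and using $B_{d_{\N[X]}}(p,r)\subseteq\SP^m(X)$ gives
\[
B_{d_{\N[X]}}(p,r)=B_{d_{\N[X]}}(p,r)\cap\SP^m(X)\subseteq B_{d_{\N[X]}}(p,r_1)\cap\SP^m(X)\subseteq U.
\]
Hence $U$ is $\mathcal{T}_d$–open, which completes the reverse inclusion and the proof.

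I expect the main obstacle to be the matching estimate: one must argue carefully that reducing to reduced representatives and padding does not change $d_{\N[X]}$ (guaranteed by well-definedness, Proposition~\ref{prop:SP(X)_metric}) and that the pigeonhole lower bound survives both the minimization over permutations and the infimum over $N$. The remaining stratification argument is then a direct consequence of the already-established metrizability of each $\SP^m(X)$, and the easy inclusion requires nothing beyond the isometric embedding property.
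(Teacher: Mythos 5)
Your proof is correct and follows essentially the same strategy as the paper's: the easy inclusion via the isometric embeddings $\SP^n(X)\hookrightarrow\N[X]$ together with Lemma~\ref{lem:top_equal_SPn}, and the reverse inclusion by using the isolation radius of $e$ to show that a sufficiently small $d_{\N[X]}$-ball around a point of reduced length $m$ stays inside $\SP^m(X)$, then invoking openness of $U\cap\SP^m(X)$ in the $d_m$-topology. Your pigeonhole lower bound $d_{\N[X]}(p,q)\ge\lvert\ell(p)-\ell(q)\rvert\,\delta$ is just a cleaner packaging of the paper's decomposition of the optimal matching into the $\SP^n$-block and the padded-$e$ block.
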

\begin{proof}
    By Theorem~\ref{thm:SPn_iso_embed}, the canonical inclusions
\[
\iota_n \colon (\SP^n(X),d_n)\hookrightarrow (\SP(X),d_{\N[X]})
\]
are isometries for every $n$. Therefore, by the universal property of the colimit, there exists a continuous
map $\N[X] \to \SP(X)$, which shows that the $d_{\N[X]}$–topology is
\emph{coarser} than the direct--limit topology on $\SP(X)$.

It remains to show the reverse inclusion (i.e. the $d_{\N[X]}$–topology is
\emph{finer}). Let $U\subseteq \SP(X)$ be nonempty open in the direct–limit topology
and choose a point $[x_1,\dots,x_n]\in U$ with
$[x_1,\dots,x_n]\notin \SP^{n-1}(X)$. Then $U\cap \SP^n(X)$ is open in
$\SP^n(X)$, so there exists $\varepsilon_0>0$ such that
\[
B_{d_n}\bigl([x_1,\dots,x_n],\varepsilon_0\bigr)\subset U\cap \SP^n(X).
\]
Because $e$ is isolated, there is $r_0>0$ with $d(e,y)\ge r_0$ for all
$y\in X\setminus\{e\}$. Set
\[
\delta \;:=\; \min\{\varepsilon_0,\; r_0/2\}.
\]
We claim that
\[
B_{d_{\N[X]}}\bigl([x_1,\dots,x_n],\delta\bigr)\subset U,
\]
which will complete the proof.

Let $[z]\in B_{d_{\N[X]}}([x_1,\dots,x_n],\delta)$. By definition of $d_{\N[X]}$,
there exists $k\ge n$ such that $[z]=[z_1,\dots,z_k]\in \SP^k(X)$. After
relabeling the entries if necessary, we may assume that
\[
d_k\!\bigl([x_1,\dots,x_n,e,\ldots,e],[z_1,\dots,z_k]\bigr)
= d_n([x_1,\dots,x_n],[z_1,\dots,z_n])
+ d_{k-n}([e],[z_{n+1},\dots,z_k]).
\]
Consequently,
\begin{align*}
d_k\!\bigl([x_1,\dots,x_n,e,\ldots,e],[z_1,\dots,z_k]\bigr)
&= d_n([x_1,\dots,x_n],[z_1,\dots,z_n])
+ d_{k-n}([e],[z_{n+1},\dots,z_k]) \\
&< \delta \;\le\; r_0/2.
\end{align*}
By the choice of $\delta$, the second term must vanish, so
\[
[z_{n+1},\dots,z_k]=[e].
\]
Hence $[z]\in \SP^n(X)$, and therefore
\[
[z]\in B_{d_n}([x_1,\dots,x_n],\delta)
\subseteq U\cap \SP^n(X)\subseteq U.
\]
Thus $B_{d_{\N[X]}}([x_1,\dots,x_n],\delta)\subset U$, showing that $U$ is open
in the $d_{\N[X]}$–topology.

We have proved both inclusions of topologies, so the $d_{\N[X]}$–topology
coincides with the direct–limit topology on $\SP(X)$.

\end{proof}

\begin{remark}
In general, the topology on $\SP(X)$ is always finer than that of $\N[X]$,
since the two constructions share the same underlying set. Moreover, by the
proof of Theorem~\ref{thm:SP(X)_topology_equivalence}, the existence of the
continuous map $\N[X] \to \SP(X)$ does not depend on the assumption that the
basepoint is isolated.
\end{remark}

\begin{remark} 
If $e$ is not isolated, the metric topology induced by $d_{\N[X]}$ can be 
strictly coarser than the direct--limit topology, as demonstrated by the
following example.

\begin{example}\label{example:non_met}
Let $X=[0,1]$ with the usual euclidean metric and take $e=0$ as the basepoint.
For each $n\ge 1$, consider the set
\[
K := \{[1], [\frac{1}{2}], [\frac{1}{3}], \dots\} \subset \SP(X).
\]
Observe that $K \cap \SP^{n}(X)$ is finite for each $n$, hence closed in
$\SP^{n}(X)$. By definition of the direct--limit topology, $K$ is therefore
closed in $\SP(X)$. On the other hand, by the definition of $d_{\N[X]}$ we have
\[
\lim_{n\to\infty} d_{\N[X]}([\frac{1}{n}],[e]) = \lim_{n\to\infty} \frac{1}{n} = 0,
\]
so the sequence $([\frac{1}{n}])_{n\ge 1}$ converges to $[e]$ in the $d_{\N[X]}$--metric.
Since $[e]\notin K$, the closed set $K$ does not contain all its $d_{\N[X]}$--limits,
showing that the $d_{\N[X]}$--topology is strictly coarser than the direct--limit
topology. In particular, $[e]$ does not admit a countable local basis in $\SP(I)$, hence $\SP(I)$ is not metrizable.
\end{example}
\end{remark}

\section{Topological Properties and Completion of ${\N[X]}$}
In this section, we investigate some properties of $\N[X]$ defined in
Section~\ref{sec:N[X]_met}. Proposition~\ref{prop:SPX_connectedness_family} addresses the basic topological
properties of $\N[X]$, while Proposition~\ref{prop:barSP_complete} concerns its
completion.

\begin{proposition}\label{prop:SPX_connectedness_family}
Let $(X,d)$ be a metric space with basepoint $e\in X$, and let $\N[X]$ be
endowed with the matching distance $d_{\N[X]}$ from
Definition~\ref{def:SP(X)_metric}. Then:
\begin{enumerate}[label=(\roman*)]
    \item\label{item:N[X]_top_1} If $X$ is connected, then $\N[X]$ is connected.
    \item\label{item:N[X]_top_2} If $X$ is path connected, then $\N[X]$ is path connected.
\end{enumerate}
\end{proposition}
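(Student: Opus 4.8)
The plan is to reduce both statements to properties of the finite symmetric powers $\SP^n(X)$ together with the two structural facts already established: the inclusion $\SP^n(X)\hookrightarrow\N[X]$ is an isometric embedding (Theorem~\ref{thm:SPn_iso_embed}), so the subspace topology it inherits from $\N[X]$ is precisely the $d_n$–topology, which by Lemma~\ref{lem:top_equal_SPn} coincides with the quotient topology carried from $X^n$ along $q\colon X^n\to\SP^n(X)$; and addition is continuous (Theorem~\ref{thm:N[X]_top_monoid}). The underlying set decomposes as $\N[X]=\bigcup_{n\ge 1}\SP^n(X)$, an increasing union in which every $\SP^n(X)$ contains the identity element $[e]$.

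For part~\ref{item:N[X]_top_1}, I would first observe that if $X$ is connected then the finite product $X^n$ is connected for every $n$. Since $q\colon X^n\to\SP^n(X)$ is a continuous surjection onto $\SP^n(X)$ with its quotient topology—equivalently, its subspace topology inside $\N[X]$, by the identifications above—each $\SP^n(X)$ is a continuous image of a connected space and is therefore connected as a subspace of $\N[X]$. As all of these subspaces share the common point $[e]$, the standard fact that a family of connected subspaces with a point in common has connected union gives that $\N[X]=\bigcup_{n\ge 1}\SP^n(X)$ is connected.

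For part~\ref{item:N[X]_top_2}, I would show that every point of $\N[X]$ can be joined to $[e]$ by a path, which suffices by concatenation. The map $x\mapsto[x]$ is the isometric embedding $X\cong\SP^1(X)\hookrightarrow\N[X]$, so if $X$ is path connected, then for any $x\in X$ a path $\gamma$ in $X$ from $x$ to $e$ induces a continuous path $t\mapsto[\gamma(t)]$ in $\N[X]$ from $[x]$ to $[e]$. Writing an arbitrary element as $[x_1,\dots,x_m]=[x_1]+\cdots+[x_m]$, I would choose such paths $\gamma_1,\dots,\gamma_m$ and form $t\mapsto[\gamma_1(t)]+\cdots+[\gamma_m(t)]$; continuity of the finitely iterated addition map from Theorem~\ref{thm:N[X]_top_monoid} makes this a continuous path from $[x_1,\dots,x_m]$ to $[e]+\cdots+[e]=[e]$. (Equivalently, one may realize the same path directly as the composite $[0,1]\to X^m\xrightarrow{q}\SP^m(X)\hookrightarrow\N[X]$, $t\mapsto[\gamma_1(t),\dots,\gamma_m(t)]$.) Hence $\N[X]$ is path connected.

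The arguments are short because the substantive work already resides in the earlier results; the only point demanding care is the matching of topologies. For part~\ref{item:N[X]_top_1} I must invoke the isometric embedding together with Lemma~\ref{lem:top_equal_SPn} to guarantee that the subspace topology on $\SP^n(X)$ inside $\N[X]$ really is the quotient topology, so that connectedness transfers along $q$; and for part~\ref{item:N[X]_top_2} I must verify that the candidate path is continuous \emph{into} $\N[X]$ and not merely into some $\SP^n(X)$, which is exactly what the isometric embedding (and continuity of addition) supplies. I expect no genuine obstacle beyond this bookkeeping.
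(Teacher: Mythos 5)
Your proof is correct and follows essentially the same route as the paper: each $\SP^n(X)$ is the continuous image of the (path) connected space $X^n$, and the increasing union of these subspaces sharing the common point $[e]$ is (path) connected. The only differences are that you make explicit the identification of the subspace topology on $\SP^n(X)\subset\N[X]$ with the quotient topology (via Theorem~\ref{thm:SPn_iso_embed} and Lemma~\ref{lem:top_equal_SPn}), a point the paper leaves implicit, and that in part~(ii) you additionally offer an equivalent construction of explicit paths to $[e]$ using continuity of addition.
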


\begin{proof}
\noindent\textbf{\ref{item:N[X]_top_1}.}
For each $m \geq 1$, consider the subspace $\SP^m(X) \subset \N[X]$.
Since $X$ is connected, so is $X^m$. As the quotient map
$\pi \colon X^m \to \SP^m(X)$ is continuous, it follows that
$\SP^m(X)$ is connected. Moreover, for every $m \geq 1$ the space
$\SP^m(X)$ contains the common point $[e]$. Hence the increasing union
\[
\N[X] \;=\; \bigcup_{m \geq 1} \SP^m(X)
\]
is a union of connected subspaces with a nonempty mutual intersection, and
therefore connected.

\smallskip
\noindent\textbf{\ref{item:N[X]_top_2}.}
For each $m \geq 1$, consider the subspace $\SP^m(X) \subset \N[X]$.
Since $X$ is path connected, so is $X^m$. As the quotient map
$\pi \colon X^m \to \SP^m(X)$ is continuous, it follows that
$\SP^m(X)$ is path connected. Moreover, for every $m \geq 1$ the space
$\SP^m(X)$ contains the common point $[e]$. Hence the increasing union
\[
\N[X] \;=\; \bigcup_{m \geq 1} \SP^m(X)
\]
is a union of path connected subspaces with a nonempty mutual intersection, and
therefore path connected.

\end{proof}

Now, we provide a counterexample showing that $\N[X]$ is not complete in
general (Example~\ref{example:incomplete}), even when $X$ itself is complete, and then describe the form of its
completion (Theorem~\ref{thm:completion_NX}).

\begin{example}[Incompleteness in general]\label{example:incomplete}
Let $X=\mathbb{R}$ with its usual euclidean metric and basepoint $e=0$. It is well known that $X$ is a complete metric space.
Choose a sequence of distinct points $z_k\in X$ with $d(z_k,0)=2^{-k}$,
and set
\[
\mathbf{x}_n := [z_1,\dots,z_n]\in \N[X].
\]
Then for $m>n$,
\[
d_{\N[X]}(\mathbf{x}_m,\mathbf{x}_n)\;\le\;\sum_{k=n+1}^{m} d(z_k,0)
\;=\;\sum_{k=n+1}^{m} 2^{-k}\xrightarrow[n,m\to\infty]{}0,
\]
so $(\mathbf{x}_n)_n$ is Cauchy in $(\SP(X),d_{\N[X]})$.
However, it has no limit in $\N[X]$, since any limit would need to contain
infinitely many points $\{z_k\}$, whereas $\N[X]$ consists of \emph{finite}
multiset. Hence $\N[X]$ is not complete.
\end{example}

For the purpose of constructing the completion, we begin by introducing the
following definition.

\begin{definition}[$\ell^1$ pseudometric on sequences]\label{def:l1_pseudometric}
Let $(X,d)$ be a metric space with basepoint $e\in X$.
Consider the set $X^\ast$ of sequences $\mathbf{x}=(x_i)_{i\in\mathbb{N}}$
such that
\[
\sum_{i=1}^\infty d(x_i,e) \;<\; \infty.
\]
For two such sequences $\mathbf{x}=(x_i)$ and $\mathbf{y}=(y_i)$, define
\[
\tilde d_\ell(\mathbf{x},\mathbf{y})
\;:=\;
\inf_{\sigma \in S_\infty}\;
\sum_{i=1}^\infty d\bigl(x_i,y_{\sigma(i)}\bigr),
\]
where $S_\infty$ is the group of all bijections of $\mathbb{N}$.
Since the summands are nonnegative, the series is well defined in $[0,\infty]$,
and by the triangle inequality one has
\[
\tilde d_\ell(\mathbf{x},\mathbf{y})
\;\leq\;
\sum_{i=1}^\infty d(x_i,e) + \sum_{i=1}^\infty d(e,y_i) < \infty.
\]
Thus $\tilde d_\ell$ is a well-defined function
$X^\ast \times X^\ast \to [0,\infty)$.
\end{definition}

\begin{lemma}\label{lem:l1_pseudometric}
The function $\tilde d_\ell$ is a pseudometric on the set of sequences in $X^\ast$
with finite $\ell^1$-mass.
\end{lemma}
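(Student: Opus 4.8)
The plan is to verify the three pseudometric axioms—nonnegativity/symmetry, the fact that $\tilde d_\ell(\mathbf{x},\mathbf{x})=0$, and the triangle inequality—adapting the arguments from Proposition~\ref{prop:SP(X)_metric} to the setting of infinite sequences and the full bijection group $S_\infty$. Nonnegativity is immediate since $d\ge 0$, and symmetry follows exactly as in part (iii) of Proposition~\ref{prop:SP(X)_metric}: for any bijection $\sigma\in S_\infty$, reindexing the sum via $j=\sigma(i)$ shows $\sum_i d(x_i,y_{\sigma(i)})=\sum_j d(x_{\sigma^{-1}(j)},y_j)$, so taking the infimum over $\sigma$ (equivalently over $\sigma^{-1}$) gives $\tilde d_\ell(\mathbf{x},\mathbf{y})=\tilde d_\ell(\mathbf{y},\mathbf{x})$. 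For the identity axiom, I would take $\sigma=\mathrm{id}$ to get $\tilde d_\ell(\mathbf{x},\mathbf{x})\le\sum_i d(x_i,x_i)=0$; note that this direction only gives a pseudometric, which is exactly what is claimed (distinct sequences that are permutations of one another have distance $0$).

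The main work is the triangle inequality $\tilde d_\ell(\mathbf{x},\mathbf{z})\le \tilde d_\ell(\mathbf{x},\mathbf{y})+\tilde d_\ell(\mathbf{y},\mathbf{z})$. Here I cannot simply "choose optimal permutations" as in the finite case, because the infimum over $S_\infty$ need not be attained. Instead I would argue with $\varepsilon$-approximants: fix $\varepsilon>0$ and choose $\sigma,\tau\in S_\infty$ with
\[
\sum_{i=1}^\infty d\bigl(x_i,y_{\sigma(i)}\bigr)\le \tilde d_\ell(\mathbf{x},\mathbf{y})+\varepsilon,
\qquad
\sum_{i=1}^\infty d\bigl(y_i,z_{\tau(i)}\bigr)\le \tilde d_\ell(\mathbf{y},\mathbf{z})+\varepsilon.
\]
Setting $\pi:=\tau\circ\sigma\in S_\infty$ (a bijection, since $S_\infty$ is a group), the pointwise triangle inequality in $X$ gives $d(x_i,z_{\pi(i)})\le d(x_i,y_{\sigma(i)})+d(y_{\sigma(i)},z_{\tau(\sigma(i))})$ for each $i$. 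Summing over $i$ and using absolute convergence to split the series, the first sum is bounded by $\tilde d_\ell(\mathbf{x},\mathbf{y})+\varepsilon$, while the second sum equals $\sum_i d(y_{\sigma(i)},z_{\tau(\sigma(i))})=\sum_j d(y_j,z_{\tau(j)})$ after the reindexing $j=\sigma(i)$ (legitimate because $\sigma$ is a bijection and all terms are nonnegative), which is bounded by $\tilde d_\ell(\mathbf{y},\mathbf{z})+\varepsilon$. Hence $\tilde d_\ell(\mathbf{x},\mathbf{z})\le \sum_i d(x_i,z_{\pi(i)})\le \tilde d_\ell(\mathbf{x},\mathbf{y})+\tilde d_\ell(\mathbf{y},\mathbf{z})+2\varepsilon$, and letting $\varepsilon\to 0$ yields the claim.

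The step I expect to be the main obstacle is justifying the manipulations of infinite series, since absolute convergence and the permitted reindexings must be checked carefully. Specifically, I need that all three series are finite: the definition already guarantees $\tilde d_\ell$ takes values in $[0,\infty)$ via the bound $\tilde d_\ell(\mathbf{x},\mathbf{y})\le\sum_i d(x_i,e)+\sum_i d(e,y_i)<\infty$, so the chosen $\sigma,\tau$ give convergent sums. The reindexing $\sum_i d(y_{\sigma(i)},z_{\tau(\sigma(i))})=\sum_j d(y_j,z_{\tau(j)})$ is valid because rearranging a series of nonnegative terms preserves its sum, and splitting $\sum_i[d(x_i,y_{\sigma(i)})+d(y_{\sigma(i)},z_{\tau(\sigma(i))})]$ into two separate series is legitimate since each piece converges. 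Once these convergence points are in place the estimate is routine, and I would remark explicitly that the result is only a \emph{pseudometric}—not a metric—since $\tilde d_\ell(\mathbf{x},\mathbf{y})=0$ holds whenever $\mathbf{y}$ is a rearrangement of $\mathbf{x}$, motivating the passage to equivalence classes in the subsequent construction of the completion $\overline{\N}[X]$.
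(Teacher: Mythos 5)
Your proposal is correct and follows essentially the same route as the paper: symmetry and nonnegativity are immediate, and the triangle inequality is obtained from $\varepsilon$-approximate bijections $\sigma,\tau$ and the composite $\pi=\tau\circ\sigma$. Your version is in fact slightly more careful than the paper's, since you explicitly justify the reindexing $\sum_i d\bigl(y_{\sigma(i)},z_{\tau(\sigma(i))}\bigr)=\sum_j d\bigl(y_j,z_{\tau(j)}\bigr)$ and the splitting of the nonnegative series, steps the paper leaves implicit.
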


\begin{proof}
Nonnegativity and symmetry are immediate.  
For the triangle inequality, let $\mathbf{x},\mathbf{y},\mathbf{z}$ be such sequences and $\varepsilon>0$.
Choose permutations $\sigma,\tau$ so that
\[
\sum_i d(x_i,y_{\sigma(i)}) \leq \tilde d_\ell(\mathbf{x},\mathbf{y}) + \tfrac{\varepsilon}{2},\quad
\sum_i d(y_i,z_{\tau(i)}) \leq \tilde d_\ell(\mathbf{y},\mathbf{z}) + \tfrac{\varepsilon}{2}.
\]
Then with $\pi := \tau \circ \sigma$,
\[
\sum_i d(x_i,z_{\pi(i)})
\;\leq\;
\sum_i \Bigl( d(x_i,y_{\sigma(i)}) + d(y_{\sigma(i)},z_{\pi(i)}) \Bigr)
\;\leq\; \tilde d_\ell(\mathbf{x},\mathbf{y}) + \tilde d_\ell(\mathbf{y},\mathbf{z}) + \varepsilon.
\]
Since $\varepsilon >0$ is arbitrary, it follows that
\[
\sum_i d(x_i,z_{\pi(i)}) \leq\; \tilde d_\ell(\mathbf{x},\mathbf{y}) + \tilde d_\ell(\mathbf{y},\mathbf{z}).
\]
Taking the infimum over permutations gives
$\tilde d_\ell(\mathbf{x},\mathbf{z}) \leq \tilde d_\ell(\mathbf{x},\mathbf{y}) + \tilde d_\ell(\mathbf{y},\mathbf{z})$.
\end{proof}

\begin{definition}[$\ell^1$-multisets]\label{def:complete_criterion}
Define an equivalence relation $\sim$ on $X^\ast$ by declaring
$\mathbf{x} \sim \mathbf{y}$ if $\tilde d_\ell(\mathbf{x},\mathbf{y})=0$.
Equivalently, $\mathbf{x}$ and $\mathbf{y}$ differ by a bijection of $\mathbb{N}$
(permutation of coordinates) and by inserting or removing finitely many copies of $e$.

The \textbf{$\ell^1$-multiset space} $\overline{\N}[X]$ is the quotient
\[
\overline{\N}[X] \;:=\; X^\ast / \sim.
\]
We denote its elements by $[x_1,x_2,\dots]$ and call them \textbf{$\ell^1$-multisets}.

The pseudometric $\tilde d_\ell$ descends to a genuine metric $d_\ell$ on $\overline{\N}[X]$, called the
\textbf{extended matching distance}.
\end{definition}

\begin{remark}
Finite multisets embed naturally into $\overline{\N}[X]$.
Indeed, there is an inclusion $\iota:\N[X]\hookrightarrow \overline{\N}[X]$ given by
\[
\iota\bigl([x_1,\dots,x_m]\bigr) \;=\; [x_1,\dots,x_m,e,e,\dots].
\]
\end{remark}

\begin{lemma}\label{lem:d1_metric}
The metric $d_\ell$ on $\overline{\N}[X]$ extends $d_{\N[X]}$ in the sense that for all
$\mathbf{x},\mathbf{y}\in \N[X]$,
\[
d_\ell\bigl(\iota(\mathbf{x}),\iota(\mathbf{y})\bigr) \;=\; d_{\N[X]}(\mathbf{x},\mathbf{y}).
\]
Moreover, $\iota:\bigl(\N[X],d_{\N[X]}\bigr)\to\bigl(\overline{\N}[X],d_\ell\bigr)$
is an isometric embedding.
\end{lemma}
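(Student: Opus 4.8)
The plan is to prove the two assertions in order: first the equality of distances on finite multisets under the inclusion $\iota$, and then deduce the isometric embedding as an immediate consequence (an isometric embedding is, by definition, an injective map preserving distances, and preservation of distances is exactly the displayed equality). So the entire content lies in establishing
\[
d_\ell\bigl(\iota(\mathbf{x}),\iota(\mathbf{y})\bigr) = d_{\N[X]}(\mathbf{x},\mathbf{y})
\]
for all finite multisets $\mathbf{x}=[x_1,\dots,x_m]$ and $\mathbf{y}=[y_1,\dots,y_n]$. Recall that $\iota(\mathbf{x})=(x_1,\dots,x_m,e,e,\dots)$ and $\iota(\mathbf{y})=(y_1,\dots,y_n,e,e,\dots)$ as sequences in $X^\ast$, and that $d_\ell$ is computed by $\tilde d_\ell$, an infimum over \emph{all} bijections $\sigma\in S_\infty$ of $\mathbb{N}$.

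First I would prove the inequality $d_\ell(\iota(\mathbf{x}),\iota(\mathbf{y}))\le d_{\N[X]}(\mathbf{x},\mathbf{y})$. By Remark~\ref{remark:d_SP_suff_condition}, for any $N\ge\max(m,n)$ there is a permutation $\sigma\in S_N$ realizing $d_{\N[X]}(\mathbf{x},\mathbf{y})=\sum_{i=1}^N d(\mathbf{x}_N(i),\mathbf{y}_N(\sigma(i)))$. I extend $\sigma$ to a bijection $\bar\sigma\in S_\infty$ by setting $\bar\sigma(i)=i$ for $i>N$; since all padded entries beyond index $N$ are the basepoint $e$ on both sides, every additional term contributes $d(e,e)=0$. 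Hence $\tilde d_\ell(\iota(\mathbf{x}),\iota(\mathbf{y}))\le\sum_{i=1}^\infty d(\iota(\mathbf{x})_i,\iota(\mathbf{y})_{\bar\sigma(i)})=d_{\N[X]}(\mathbf{x},\mathbf{y})$, giving one direction.

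The reverse inequality $d_\ell(\iota(\mathbf{x}),\iota(\mathbf{y}))\ge d_{\N[X]}(\mathbf{x},\mathbf{y})$ is the main obstacle, because here $\sigma$ ranges over all bijections of $\mathbb{N}$, which may scatter the $m+n$ genuinely non-basepoint entries across infinitely many positions in ways that a finite-$N$ permutation cannot directly mimic. The key observation is that only finitely many coordinates of $\iota(\mathbf{x})$ and $\iota(\mathbf{y})$ differ from $e$: say the non-$e$ positions of $\iota(\mathbf{x})$ lie in a finite set $A$ and those of $\iota(\mathbf{y})$ in a finite set $B$. For any $\sigma\in S_\infty$ and any coordinate $i$, the term $d(\iota(\mathbf{x})_i,\iota(\mathbf{y})_{\sigma(i)})$ is nonzero only if $i\in A$ or $\sigma(i)\in B$, i.e.\ only if $i\in A\cup\sigma^{-1}(B)$, a finite set. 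Thus the infinite sum is really a finite sum supported on the finitely many indices where either entry is non-$e$. Choosing $N$ large enough that $A\cup\{1,\dots,m\}\subseteq\{1,\dots,N\}$ and $B\cup\{1,\dots,n\}\subseteq\{1,\dots,N\}$, I would argue that any $\sigma\in S_\infty$ can be replaced—without increasing its cost—by one that restricts to a permutation of $\{1,\dots,N\}$ and fixes the remaining (all-$e$) coordinates: whenever $\sigma$ sends some index in $\{1,\dots,N\}$ to an index $>N$, the target entry is $e$, and one can compose with a transposition to reroute it to an unused $e$-slot inside $\{1,\dots,N\}$ without changing any distance, since all the swapped entries equal $e$. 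This yields a matching realized by some $\tau\in S_N$ whose cost equals the (finitely supported) cost of $\sigma$, so $\tilde d_\ell(\iota(\mathbf{x}),\iota(\mathbf{y}))\ge\min_{\tau\in S_N}\sum_{i=1}^N d(\mathbf{x}_N(i),\mathbf{y}_N(\tau(i)))=d_{\N[X]}(\mathbf{x},\mathbf{y})$, again by Remark~\ref{remark:d_SP_suff_condition}. Combining the two inequalities gives the equality, and injectivity of $\iota$ together with distance preservation yields that $\iota$ is an isometric embedding. The care needed in the rerouting argument—verifying that displaced $e$-entries can always be absorbed into the finite block without cost—is where I would be most explicit, as it is the one place the transition from $S_\infty$ to $S_N$ could hide an error.
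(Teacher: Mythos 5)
Your argument is correct in substance, and it is considerably more explicit than the paper's own treatment: the paper disposes of Lemma~\ref{lem:d1_metric} with the single sentence ``It follows directly from the definition of $\overline{\N}[X]$,'' offering no verification of either inequality. Your two-inequality structure is exactly what is needed, and the easy direction (extend a finite optimal $\sigma\in S_N$ by the identity, picking up only $d(e,e)=0$ terms) is airtight. The reverse direction is indeed the real content, and your finite-support observation --- that the cost of any $\sigma\in S_\infty$ is supported on the finite set $A\cup\sigma^{-1}(B)$ --- is the right key. The one spot needing more care, which you correctly flag yourself, is the rerouting step: when $\sigma$ sends some $i\le N$ to an index $>N$, the freed-up target slot $j'\le N$ outside the image of $\sigma|_{\{1,\dots,N\}}$ need not be an $e$-slot; it could lie in $B$. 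Two clean fixes: either note that the transposition still does not increase the cost, since replacing the pair of terms $d(x_i,e)+d(e,y_{j'})$ by $d(x_i,y_{j'})+d(e,e)$ only helps by the triangle inequality; or, more directly, observe that $\sigma$ restricted to $A\cup\sigma^{-1}(B)$ is a bijection onto $\sigma(A)\cup B$, and the source and target entries over these index sets are padded representatives of $[\mathbf{x}]$ and $[\mathbf{y}]$ of equal length $\ge\max(m,n)$, so the restricted cost is already bounded below by $d_{\N[X]}(\mathbf{x},\mathbf{y})$ via Remark~\ref{remark:d_SP_suff_condition} and the well-definedness argument of Proposition~\ref{prop:SP(X)_metric}. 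With either repair the proof is complete, and your write-up supplies precisely the justification the paper omits.
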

\begin{proof}
    It follows directly from the definition of $\overline{\N}[X]$.
\end{proof}

\begin{proposition}[Completeness of {$(\overline{\N}[X],d_\ell)$}]\label{prop:barSP_complete}
    If $(X,d)$ is complete, then $\bigl(\overline{\N}[X],d_\ell\bigr)$ is complete.
\end{proposition}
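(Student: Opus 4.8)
The plan is to prove sequential completeness via the standard reduction: it suffices to show that every Cauchy sequence in $\bigl(\overline{\N}[X],d_\ell\bigr)$ admits a convergent subsequence, since a Cauchy sequence having a convergent subsequence converges to the same limit. So I would begin with a Cauchy sequence of classes $(\mathbf{X}^{(n)})_n$ and pass to a subsequence, still written $\mathbf{X}^{(k)}$, satisfying $d_\ell\bigl(\mathbf{X}^{(k)},\mathbf{X}^{(k+1)}\bigr)<2^{-k}$ for every $k$.

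Next I would fix, once and for all, a representative sequence $(x_i^{(k)})_{i\in\mathbb{N}}\in X^\ast$ of each class $\mathbf{X}^{(k)}$. Since $d_\ell$ is induced by $\tilde d_\ell$, which is defined as an infimum over bijections (Definition~\ref{def:l1_pseudometric}), for each $k$ there is a bijection $\sigma_k\in S_\infty$ with $\sum_i d\bigl(x_i^{(k)},x_{\sigma_k(i)}^{(k+1)}\bigr)<2^{-k}$. The key device is to thread these matchings together: setting $\tau_1:=\mathrm{id}$ and $\tau_{k+1}:=\sigma_k\circ\tau_k$, I define relabelled representatives $y_i^{(k)}:=x_{\tau_k(i)}^{(k)}$. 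Each $\mathbf{Y}^{(k)}:=(y_i^{(k)})_i$ is a permutation of $(x_i^{(k)})_i$, hence still lies in $X^\ast$ and represents $\mathbf{X}^{(k)}$; and reindexing by the bijection $\tau_k$ yields the coordinatewise-aligned bound $\sum_i d\bigl(y_i^{(k)},y_i^{(k+1)}\bigr)=\sum_j d\bigl(x_j^{(k)},x_{\sigma_k(j)}^{(k+1)}\bigr)<2^{-k}$. Note that only the \emph{finite} compositions $\tau_k$ are ever used, so no convergence of an infinite product of permutations is required.

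With the representatives aligned, I would invoke completeness of $X$ one coordinate at a time. For each fixed $i$ the increments satisfy $d\bigl(y_i^{(k)},y_i^{(k+1)}\bigr)\le\sum_{j} d\bigl(y_j^{(k)},y_j^{(k+1)}\bigr)<2^{-k}$, so $\sum_k d\bigl(y_i^{(k)},y_i^{(k+1)}\bigr)<\infty$ and $(y_i^{(k)})_k$ is Cauchy in $X$; by completeness it converges to some $y_i^{(\infty)}\in X$. Setting $\mathbf{Y}^{(\infty)}:=(y_i^{(\infty)})_i$, I then check it is a legitimate element of $X^\ast$: by the triangle inequality and a Tonelli interchange of the two nonnegative sums, $\sum_i d\bigl(y_i^{(\infty)},e\bigr)\le\sum_i d\bigl(y_i^{(1)},e\bigr)+\sum_k\sum_i d\bigl(y_i^{(k)},y_i^{(k+1)}\bigr)<\infty$, the first term being finite because $\mathbf{Y}^{(1)}\in X^\ast$.

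Finally I would establish convergence $d_\ell\bigl(\mathbf{Y}^{(k)},\mathbf{Y}^{(\infty)}\bigr)\to0$. Using the identity bijection as a (generally non-optimal) matching gives the upper bound $d_\ell\bigl(\mathbf{Y}^{(k)},\mathbf{Y}^{(\infty)}\bigr)\le\sum_i d\bigl(y_i^{(k)},y_i^{(\infty)}\bigr)$, and since $d\bigl(y_i^{(k)},y_i^{(\infty)}\bigr)\le\sum_{j\ge k}d\bigl(y_i^{(j)},y_i^{(j+1)}\bigr)$, a further Tonelli interchange yields $\sum_i d\bigl(y_i^{(k)},y_i^{(\infty)}\bigr)\le\sum_{j\ge k}2^{-j}=2^{-k+1}\to0$. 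Hence the subsequence converges to $\mathbf{Y}^{(\infty)}$ in $\overline{\N}[X]$, and the original Cauchy sequence converges as well. The main obstacle is the threading step: one must relabel the representatives consistently so that the near-optimal matchings become the identity, converting a sequence of coupled matching problems into genuine coordinatewise control. Once this is achieved, completeness reduces to the coordinatewise completeness of $X$ together with two routine interchanges of nonnegative double sums.
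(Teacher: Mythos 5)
Your proof is correct, and it reaches the same endgame as the paper (coordinatewise limits in the complete space $X$, membership of the limit in $X^\ast$, and a tail estimate for convergence), but the way you organize the crucial alignment of representatives is genuinely different and, in fact, more robust. The paper fixes $\varepsilon>0$, picks a single index $N$, and permutes every $\mathbf{x}^{(n)}$ with $n\ge N$ to nearly match $\mathbf{x}^{(N)}$; it then asserts $d\bigl(x^{(n)}_i,x^{(m)}_i\bigr)\le d_\ell\bigl(\mathbf{x}^{(n)},\mathbf{x}^{(m)}\bigr)$ to get coordinatewise Cauchyness. That inequality is only justified after the realignment, and since the chosen permutations depend on $\varepsilon$ and on $N$, the paper's ``without loss of generality'' conceals a genuine bookkeeping issue (the realignment would have to be redone for each $\varepsilon$, potentially changing the coordinatewise sequences). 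Your device of passing to a subsequence with $d_\ell\bigl(\mathbf{X}^{(k)},\mathbf{X}^{(k+1)}\bigr)<2^{-k}$ and threading the consecutive near-optimal matchings via $\tau_{k+1}=\sigma_k\circ\tau_k$ fixes the representatives once and for all, so the identity matching controls all consecutive pairs simultaneously; the summable geometric bound then delivers both the coordinatewise Cauchyness and the two Tonelli interchanges cleanly, and the final reduction from ``convergent subsequence'' back to the full Cauchy sequence is standard. What your approach buys is a fully rigorous alignment step at the modest cost of passing to a subsequence; what the paper's approach would buy, if tightened, is a direct argument on the original sequence. Your version is the one I would keep.
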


\begin{proof}
Let $\bigl([\mathbf{x}^{(n)}]\bigr)_{n\ge1}$ be a $d_\ell$--Cauchy sequence in $\overline{\N}[X]$.
Pick representatives $\mathbf{x}^{(n)}=(x^{(n)}_i)_{i\in\N}\in X^\ast$ for each $n$.

Fix $\varepsilon > 0$.
Since $\bigl([\mathbf{x}^{(n)}]\bigr)$ is $d_\ell$--Cauchy, there exists $N > 0$ such that
for all $n \ge N$ we can choose a permutation $\sigma_{n}\in S_\infty$ with
\[
\sum_{i=1}^\infty d\!\bigl(x^{(N)}_i,\,x^{(n)}_{\sigma_{n}(i)}\bigr)
\;<\; \frac{\varepsilon}{2}.
\]
It follows that for every $n,m \geq N$,
\[
\sum_{i=1}^\infty d\!\bigl(x^{(n)}_{\sigma_{n}(i)},\,x^{(m)}_{\sigma_{m}(i)}\bigr)
\;\leq\; 
\sum_{i=1}^\infty d\!\bigl(x^{(N)}_i,\,x^{(n)}_{\sigma_{n}(i)}\bigr) \;+\;
\sum_{i=1}^\infty d\!\bigl(x^{(N)}_i,\,x^{(m)}_{\sigma_{m}(i)}\bigr)
\;<\; \varepsilon.
\]
Thus we may, without loss of generality, replace each $\mathbf{x}^{(n)}$ by a suitably permuted representative.

Since
\[
d\bigl(x^{(n)}_i, x^{(m)}_i \bigr) \;\leq\; d_\ell\bigl(\mathbf{x}^{(n)},\mathbf{x}^{(m)}\bigr),
\]
the sequence $(x^{(n)}_i)_{n}$ is Cauchy in $X$ for every fixed $i$. By the completeness of $X$, there exists $x_i \in X$ with $x^{(n)}_i \to x_i$. Define
$\mathbf{x}=[x_1,x_2,\ldots]$.

Since there exists $n$ with $d_\ell(\mathbf{x}, \mathbf{x}^{(n)}) < \varepsilon$, we have
\[
d_\ell(\mathbf{x}, [e]) \;\leq\; d_\ell(\mathbf{x}, \mathbf{x}^{(n)}) + d_\ell(\mathbf{x}^{(n)},[e]) < \infty,
\]
hence $\mathbf{x} \in \overline{\N}[X]$.

For the fixed $N$ above, choose $K > 0$ such that
\[
\sum_{i \geq K} d(x_i, e) \;<\; \frac{\varepsilon}{2}, 
\qquad
\sum_{i \geq K} d(x^{(N)}_i, e) \;<\; \frac{\varepsilon}{2}.
\]
Then, for every $n \geq N$,
\[
\sum_{i \geq K} d(x^{(n)}_i, e)
\;\leq\;
\sum_{i \geq K} d(x^{(n)}_i, x^{(N)}_i)
\;+\; \sum_{i \geq K} d(x^{(N)}_i, e)
\;\leq\; d_\ell(\mathbf{x}^{(n)},\mathbf{x}^{(N)}) + \frac{\varepsilon}{2} 
\;<\; \varepsilon.
\]

Thus, for every $n \geq N$,
\[
d_\ell\bigl([\mathbf{x}^{(n)}],[\mathbf{x}]\bigr)
\;\leq\;
\sum_{i=1}^{K} d\bigl(x^{(n)}_i,x_i\bigr)
\;+\; \sum_{i>K}\!\!\Bigl[d\bigl(x^{(n)}_i,e\bigr)+d(x_i,e)\Bigr]
\;<\; \varepsilon \;+\; \varepsilon \;+\; \varepsilon
\;=\; 3\varepsilon.
\]
Hence
$d_\ell\bigl([\mathbf{x}^{(n)}],[\mathbf{x}]\bigr)\to 0$ as $n\to\infty$.

\medskip
Therefore every $d_\ell$--Cauchy sequence in $\overline{\N}[X]$ converges, which proves completeness.
\end{proof}

\begin{proposition}\label{prop:finite_dense}
$\N[X]$ is dense in $\overline{\N}[X]$.
\end{proposition}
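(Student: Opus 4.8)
The plan is to show that every $\ell^1$-multiset can be approximated arbitrarily well by finite multisets, which embed via $\iota$. Given an $\ell^1$-multiset $[\mathbf{x}] \in \overline{\N}[X]$ with representative $\mathbf{x}=(x_i)_{i\in\N} \in X^\ast$, the defining condition $\sum_{i=1}^\infty d(x_i,e) < \infty$ means the tail of this series is negligible. This is precisely the handle we need: truncation of the sequence yields a finite multiset, and the error incurred is controlled by the tail mass.

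First I would fix $\varepsilon > 0$. Since $\sum_{i=1}^\infty d(x_i,e) < \infty$, there exists $K \in \N$ such that $\sum_{i>K} d(x_i,e) < \varepsilon$. Define the finite multiset $\mathbf{x}^{(K)} := [x_1,\dots,x_K] \in \N[X]$, whose image under $\iota$ is the $\ell^1$-multiset $[x_1,\dots,x_K,e,e,\dots]$. Then I would estimate $d_\ell$ between $[\mathbf{x}]$ and $\iota(\mathbf{x}^{(K)})$ by using the identity permutation (matching $x_i$ to $x_i$ for $i \le K$ and $x_i$ to $e$ for $i > K$) as a candidate in the infimum defining $\tilde d_\ell$. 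This gives
\[
d_\ell\bigl([\mathbf{x}],\iota(\mathbf{x}^{(K)})\bigr)
\;\le\; \sum_{i=1}^{K} d(x_i,x_i) + \sum_{i>K} d(x_i,e)
\;=\; \sum_{i>K} d(x_i,e) \;<\; \varepsilon.
\]
Since $\varepsilon>0$ was arbitrary and $\iota(\mathbf{x}^{(K)})$ lies in $\iota(\N[X])$, this shows that $[\mathbf{x}]$ is a limit point of $\iota(\N[X])$, hence $\iota(\N[X])$ is dense in $\overline{\N}[X]$. Identifying $\N[X]$ with its isometric image under $\iota$ (justified by Lemma~\ref{lem:d1_metric}), we conclude that $\N[X]$ is dense in $\overline{\N}[X]$.

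This argument is essentially routine; the only subtlety is to be careful that the truncation produces a genuine finite multiset and that the identity permutation is an admissible competitor in the infimum over $S_\infty$, which it trivially is. There is no real obstacle here — the finite $\ell^1$-mass condition built into $X^\ast$ does all the work, guaranteeing that the tail can be made arbitrarily small. Combined with Proposition~\ref{prop:barSP_complete}, this density result establishes that $(\overline{\N}[X],d_\ell)$ is precisely the metric completion of $(\N[X],d_{\N[X]})$ whenever $X$ is complete.
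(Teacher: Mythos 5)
Your proof is correct and follows essentially the same route as the paper: truncate the sequence at a point where the tail mass $\sum_{i>K} d(x_i,e)$ drops below $\varepsilon$, and bound $d_\ell$ by matching the truncated entries identically and the tail against $e$. If anything, your use of an inequality (via the identity permutation as a competitor in the infimum) is slightly more careful than the paper's stated equality, but the argument is the same.
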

\begin{proof}
Pick an arbitrary element $\mathbf{x}=[x_1,x_2,\ldots] \in \overline{\N}[X]$ and
fix $\varepsilon > 0$. By the definition of $\overline{\N}[X]$, there exists
$N > 0$ such that
\[
\sum_{i \geq N} d(x_i, e) < \varepsilon.
\]
Define $\mathbf{x}^{(n)}=[x_1,\ldots,x_n] \in \N[X]$. Then, for every $n \geq N$,
\[
d_\ell(\mathbf{x}, \mathbf{x}^{(n)})
= \sum_{i \geq n} d(x_i, e)
\;\leq\; \sum_{i \geq N} d(x_i, e)
< \varepsilon.
\]
In particular,
\[
d_\ell(\mathbf{x}, \mathbf{x}^{(n)}) \;\longrightarrow\; 0
\quad\text{as } n \to \infty.
\]
Thus $\N[X]$ is dense in $\overline{\N}[X]$.
\end{proof}

\begin{theorem}[Metric completion of {$\N[X]$}]\label{thm:completion_NX}
If $(X,d)$ is complete, then the metric completion of $\bigl(\N[X],d_{\N[X]}\bigr)$
is isometric to $\bigl(\overline{\N}[X],d_\ell\bigr)$. In particular, the canonical
embedding $\iota:\N[X]\hookrightarrow \overline{\N}[X]$ has dense image, and
$\overline{\N}[X]$ is complete.
\end{theorem}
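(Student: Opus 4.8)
The plan is to derive the theorem directly from the three structural facts already established in the preceding discussion, combined with the standard uniqueness property of the metric completion. Recall that the metric completion $\widehat{M}$ of a metric space $(M,d_M)$ is characterized up to canonical isometry as a complete metric space equipped with an isometric embedding of $M$ having dense image; moreover, any two complete metric spaces each containing a dense isometric copy of $M$ are canonically isometric. Hence it suffices to check that $(\overline{\N}[X],d_\ell)$, together with the canonical map $\iota$, realizes precisely these properties.

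Concretely, I would assemble the ingredients in order. First, Lemma~\ref{lem:d1_metric} already supplies that $\iota\colon(\N[X],d_{\N[X]})\to(\overline{\N}[X],d_\ell)$ is an isometric embedding. Second, Proposition~\ref{prop:finite_dense} gives density of $\iota(\N[X])$ in $\overline{\N}[X]$. Third, Proposition~\ref{prop:barSP_complete}, under the hypothesis that $(X,d)$ is complete, guarantees that $(\overline{\N}[X],d_\ell)$ is itself complete. With these in hand, the uniqueness of the metric completion yields a canonical isometry between the abstract completion of $(\N[X],d_{\N[X]})$ and $(\overline{\N}[X],d_\ell)$, establishing the main claim; the final assertion is then merely a restatement of the second and third inputs.

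If one prefers not to merely cite the abstract property, I would exhibit the isometry explicitly: a $d_{\N[X]}$--Cauchy sequence in $\N[X]$ is carried by $\iota$ to a Cauchy sequence in the complete space $\overline{\N}[X]$, which therefore has a unique limit; assigning this limit descends to equivalence classes of Cauchy sequences and hence defines a map from the completion into $\overline{\N}[X]$. The isometric-embedding property forces this map to preserve distances, while density forces it to be surjective, so it is an isometry.

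I do not anticipate any genuine obstacle, since this statement is a packaging result: the analytic content—coordinatewise limits together with the $\ell^1$-tail estimate for completeness, and the truncation of a summable sequence for density—has already been discharged in Propositions~\ref{prop:barSP_complete} and~\ref{prop:finite_dense}. The only point meriting care is the precise formulation of the uniqueness of completions, ensuring that the comparison isometry is well defined on equivalence classes of Cauchy sequences and independent of the chosen representatives; this is routine once the three structural facts above are invoked.
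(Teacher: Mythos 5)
Your proposal is correct and follows essentially the same route as the paper: it assembles Lemma~\ref{lem:d1_metric} (isometric embedding), Proposition~\ref{prop:finite_dense} (density), and Proposition~\ref{prop:barSP_complete} (completeness), then invokes the characterization of the metric completion. The additional explicit construction of the comparison isometry is a harmless elaboration of what the paper leaves implicit.
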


\begin{proof}
By Lemma~\ref{lem:d1_metric}, $\iota$ is an isometric embedding.
By Proposition~\ref{prop:barSP_complete}, $\overline{\N}[X]$ is complete,
and by Proposition~\ref{prop:finite_dense} the image of $\N[X]$ is dense.
Hence $\overline{\N}[X]$ realizes the metric completion of $\N[X]$.
\end{proof}

\section{Group Completion {$\Z[X]$}}
In Section~\ref{sec:N[X]_met}, we proved that if $X$ is metrizable, then the
multiset space $\N[X]$ is also metrizable and carries the structure of a
topological abelian monoid (Theorem~\ref{thm:Z[X]_topology_equivalence}). Moreover, the canonical map
$X \hookrightarrow \N[X]$ is an isometric embedding (Theorem~\ref{thm:top_monoid_seq}). In this section, we show
that if $X$ is metrizable, then the free abelian group $\Z[X]$ is likewise
metrizable and admits the structure of a topological abelian group, and that the
composite
\[
X \;\hookrightarrow\; \N[X] \;\hookrightarrow\; \Z[X]
\]
is a sequence of isometric embeddings, with the last map being a homomorphism of topological monoids.

For notational convenience, we represent elements of $\N[X]$ and $\Z[X]$ as
finite formal sums. Specifically, we write
\[
\sum_{i=1}^{n} a_i x_i \;\in\; \N[X] \quad\text{with } a_i \in \N,
\qquad
\sum_{j=1}^{m} b_j y_j \;\in\; \Z[X] \quad\text{with } b_j \in \Z,
\]
where the coefficients record the multiplicities and only finitely many terms
are nonzero.

We begin by defining a metric on $\Z[X]$. Using this metric, and following the
approach of Section~\ref{sec:N[X]_met}, we prove that the addition operation is
continuous by establishing its $1$-Lipschitz continuity.
\begin{definition}[Matching distance on {$\Z[X]$} ]\label{def:Z[X]_metric}
For any $x\in \Z[X]$ there exist finite sums $\sum_{i=1}^{n} a_i x_i$ and
$\sum_{j=1}^{m} b_j y_j$ with $a_i,b_j\in \N$ such that
\[
x \;=\; \sum_{i=1}^{n} a_i x_i \;-\; \sum_{j=1}^{m} b_j y_j .
\]
We set the positive and negative parts of $x$ to be
\[
x^{+} \;:=\; \sum_{i=1}^{n} a_i x_i,
\qquad
x^{-} \;:=\; \sum_{j=1}^{m} b_j y_j .
\]
Given $x,y\in \Z[X]$, define
\[
d_{\Z[X]} \colon \Z[X]\times \Z[X] \longrightarrow \R_{\ge 0},
\qquad
d_{\Z[X]}(x,y)\;:=\; d_{\N[X]}\!\bigl(x^{+}+y^{-},\, y^{+}+x^{-}\bigr),
\]
where $d_{\N[X]}$ denotes the matching distance on $\SP(X)$.
\end{definition}

We now verify that the function $d_{\Z[X]}$ is in fact well-defined and that it defines a metric on the set $\Z[X]$.
\begin{proposition}\label{prop:Z[X]_metric}
The function
\[
d_{\Z[X]}(x,y)\;:=\; d_{\N[X]}\!\bigl(x^{+}+y^{-},\, y^{+}+x^{-}\bigr),
\qquad x,y\in \Z[X],
\]
is well defined and defines a metric on $\Z[X]$.
\end{proposition}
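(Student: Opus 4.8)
The plan is to verify the two requirements separately: first that $d_{\Z[X]}$ is \emph{well-defined} (independent of the chosen decomposition $x = x^+ - x^-$), and then that it satisfies the metric axioms. For well-definedness, the key observation is that any two representations of the same element $x \in \Z[X]$ differ by adding the same finite multiset to both the positive and negative parts; that is, if $x^+ - x^- = \tilde x^+ - \tilde x^-$ then $x^+ + \tilde x^- = \tilde x^+ + x^-$ as elements of $\N[X]$. I would use this together with the additivity (translation-invariance) property of $d_{\N[X]}$ already extracted in Theorem~\ref{thm:N[X]_top_monoid}: since addition is $1$-Lipschitz and in fact $d_{\N[X]}(\mathbf{a}+\mathbf{c},\mathbf{b}+\mathbf{c}) = d_{\N[X]}(\mathbf{a},\mathbf{b})$ (padding by a common multiset $\mathbf{c}$ does not change the optimal matching cost, as it can always be matched to itself at zero cost), the value $d_{\N[X]}(x^+ + y^-,\, y^+ + x^-)$ is unchanged when both decompositions are modified by a common multiset. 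This reduces well-definedness to the cancellation identity above.

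Once well-definedness is established, the metric axioms follow by transporting the corresponding properties of $d_{\N[X]}$ through the positive/negative-part construction. Nonnegativity and symmetry are immediate: symmetry because swapping $x$ and $y$ interchanges the two arguments of $d_{\N[X]}$, which is symmetric by Proposition~\ref{prop:SP(X)_metric}. For the identity of indiscernibles, $d_{\Z[X]}(x,y)=0$ forces $x^+ + y^- = y^+ + x^-$ in $\N[X]$ (since $d_{\N[X]}$ is a genuine metric), which upon rearranging gives $x^+ - x^- = y^+ - y^-$, i.e.\ $x=y$ in $\Z[X]$; the converse direction is clear.

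The main work is the triangle inequality, and this is the step I expect to be the principal obstacle. For $x,y,z \in \Z[X]$ I would aim to prove
\[
d_{\N[X]}(x^+ + z^-,\, z^+ + x^-) \;\le\; d_{\N[X]}(x^+ + y^-,\, y^+ + x^-) + d_{\N[X]}(y^+ + z^-,\, z^+ + y^-).
\]
The difficulty is that the intermediate multiset arguments on the right do not directly telescope. The strategy is to first pad all three comparisons by a common multiset so that the relevant arguments become comparable, exploiting the translation-invariance of $d_{\N[X]}$ noted above. Concretely, add $y^+ + y^-$ to both sides of the target inequality's left-hand arguments: translation-invariance gives
\[
d_{\N[X]}(x^+ + z^-,\, z^+ + x^-) = d_{\N[X]}(x^+ + z^- + y^+ + y^-,\, z^+ + x^- + y^+ + y^-).
\]
Now the left argument $x^+ + y^- + (z^- + y^+)$ and right argument $z^+ + y^- + (x^- + y^+)$ are each split so that the first group matches an argument from the first distance on the right and the second group matches an argument from the second distance; applying the ordinary triangle inequality for $d_{\N[X]}$ (Proposition~\ref{prop:SP(X)_metric}) through the intermediate multiset $y^+ + y^- + (\text{common part})$, together with the subadditivity $d_{\N[X]}(\mathbf{a}+\mathbf{b},\mathbf{a}'+\mathbf{b}') \le d_{\N[X]}(\mathbf{a},\mathbf{a}') + d_{\N[X]}(\mathbf{b},\mathbf{b}')$ from the $1$-Lipschitz property, yields the claim. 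I would carry out this padding-and-splitting bookkeeping carefully, since the bulk of the proof is organizing which summands are matched against which; once the additive structure is set up correctly, the inequality reduces to the already-established properties of $d_{\N[X]}$.
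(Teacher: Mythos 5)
Your proposal is correct and follows essentially the same route as the paper's proof: the well-definedness argument via the cross identity $x^{+}+\tilde x^{-}=\tilde x^{+}+x^{-}$, the identification of $d_{\Z[X]}(x,y)=0$ with $x^{+}+y^{-}=y^{+}+x^{-}$, and the symmetry step are all identical. The only real difference is in how the triangle inequality is organized: the paper pads both arguments of $d_{\N[X]}(x^{+}+z^{-},\,z^{+}+x^{-})$ by $y^{-}$ alone, applies the $d_{\N[X]}$-triangle inequality through the midpoint $y^{+}+x^{-}+z^{-}$, and then cancels common summands twice, whereas you pad by $y^{+}+y^{-}$ and regroup the two arguments as $(x^{+}+y^{-})+(y^{+}+z^{-})$ versus $(y^{+}+x^{-})+(z^{+}+y^{-})$, so that a single application of the subadditivity $d_{\N[X]}(\mathbf{a}+\mathbf{b},\mathbf{a}'+\mathbf{b}')\le d_{\N[X]}(\mathbf{a},\mathbf{a}')+d_{\N[X]}(\mathbf{b},\mathbf{b}')$ finishes the argument. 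Your version is, if anything, slightly cleaner: once the regrouping is in place the detour through an intermediate multiset that you also mention is unnecessary.

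One caution, which applies to the paper as well: both arguments lean on the cancellation identity $d_{\N[X]}(\mathbf{a}+\mathbf{c},\mathbf{b}+\mathbf{c})=d_{\N[X]}(\mathbf{a},\mathbf{b})$, and your parenthetical justification (``$\mathbf{c}$ can always be matched to itself at zero cost'') only yields the easy inequality $d_{\N[X]}(\mathbf{a}+\mathbf{c},\mathbf{b}+\mathbf{c})\le d_{\N[X]}(\mathbf{a},\mathbf{b})$. The direction actually needed when you \emph{remove} the padding from the left-hand side (and in the well-definedness step) is the reverse one, $d_{\N[X]}(\mathbf{a},\mathbf{b})\le d_{\N[X]}(\mathbf{a}+\mathbf{c},\mathbf{b}+\mathbf{c})$, which is not automatic: an optimal matching of $\mathbf{a}+\mathbf{c}$ with $\mathbf{b}+\mathbf{c}$ need not fix $\mathbf{c}$, and one must decompose it into alternating chains and invoke the triangle inequality in $X$ to extract a matching of $\mathbf{a}$ with $\mathbf{b}$ of no greater cost. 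The paper asserts the same property without proof, so this is a shared gap rather than an error peculiar to your write-up, but the justification as you state it proves the wrong inequality and should be replaced by the chain argument.
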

\begin{proof}
\noindent\textbf{(i) Well-definedness.}
Suppose $x = x^{+}-x^{-} = \tilde{x}^{+}-\tilde{x}^{-}$ are two
representations of $x$ with $x^{\pm},\tilde{x}^{\pm}\in \SP(X)$.
Then $x^{+}+\tilde{x}^{-} = \tilde{x}^{+}+x^{-}$ as elements of $\SP(X)$.
Hence
\[
d_{\N[X]}(x^{+}+y^{-},\, y^{+}+x^{-})
= d_{\N[X]}(\tilde{x}^{+}+y^{-},\, y^{+}+\tilde{x}^{-}),
\]
so $d_{\Z[X]}$ is independent of the chosen decomposition of $x$ (and similarly
for $y$). Thus $d_{\Z[X]}$ is well defined.

\smallskip
\noindent\textbf{(ii) Nonnegativity.}
By definition of $d_{\N[X]}$ we have $d_{\Z[X]}(x,y)\ge 0$ for all $x,y\in \Z[X]$.
Moreover, $d_{\Z[X]}(x,y)=0$ if and only if
\[
x^{+}+y^{-} = y^{+}+x^{-}
\qquad \text{in } \SP(X).
\]
This equality is equivalent to $x^{+}-x^{-}=y^{+}-y^{-}$ in $\Z[X]$, i.e.,
$x=y$. Hence $d_{\Z[X]}(x,y)=0$ if and only if $x=y$.

\smallskip
\noindent\textbf{(iii) Symmetry.}
For all $x,y\in \Z[X]$,
\begin{align*}
d_{\Z[X]}(x,y)
&= d_{\N[X]}\!\bigl(x^{+}+y^{-},y^{+}+x^{-}\bigr) \\
&= d_{\N[X]}\!\bigl(y^{+}+x^{-},x^{+}+y^{-}\bigr)
= d_{\Z[X]}(y,x),
\end{align*}
since $d_{\N[X]}$ is symmetric on $\SP(X)$.

\smallskip
\noindent\textbf{(iv) Triangle inequality.}
Let $x,y,z \in \Z[X]$ with disjoint-support decompositions
$x = x^+ - x^-$, $y = y^+ - y^-$, $z = z^+ - z^-$. 
We must show that
\[
d_{\Z[X]}(x,z) \;\le\; d_{\Z[X]}(x,y) + d_{\Z[X]}(y,z).
\]

By definition,
\[
d_{\Z[X]}(x,z) = d_{\N[X]}(x^+ + z^-, z^+ + x^-).
\]
By the triangle inequality for $d_{\N[X]}$ on $\SP(X)$ we have
\begin{align*}
&d_{\N[X]}(x^+ + z^-, z^+ + x^-)=d_{\N[X]}(x^+ + z^- + y^-, z^+ + x^- + y^-)
\\ &\leq 
d_{\N[X]}(x^+ + z^- + y^-, y^+ + x^- + z^-) + d_{\N[X]}(y^+ + x^- + z^-, z^+ + x^- + y^-).
\end{align*}

Because $d_{\N[X]}$ is insensitive to adding the same multiset to both arguments,
we may cancel the common term $z^-$ in the first summand and $x^-$ in the
second summand, yielding
\[
d_{\N[X]}(x^+ + z^- + y^-, y^+ + x^- + z^-) = d_{\N[X]}(x^+ + y^-, y^+ + x^-)
= d_{\Z[X]}(x,y),
\]
and
\[
d_{\N[X]}(y^+ + x^- + z^-, z^+ + x^- + y^-) = d_{\N[X]}(y^+ + z^-, z^+ + y^-)
= d_{\Z[X]}(y,z).
\]

Therefore,
\[
d_{\Z[X]}(x,z) \;\le\; d_{\Z[X]}(x,y) + d_{\Z[X]}(y,z),
\]
as desired.

\smallskip
Since $d_{\Z[X]}$ satisfies nonnegativity, symmetry, and the triangle inequality, it is a metric on $\Z[X]$.
\end{proof}

We now show that the metric topology on $\Z[X]$ indeed endows it with the
structure of an abelian topological group.

\begin{theorem}[Abelian Topological Group Structure of {$\Z[X]$}]
\label{thm:Z[X]_topology_equivalence}
Let $(X,e)$ be a pointed metric space.  
Then the metric $d_{\Z[X]}$ introduced in
Definition~\ref{def:Z[X]_metric} induces on $\Z[X]$ the structure of an abelian topological group.
\end{theorem}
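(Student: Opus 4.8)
The plan is to verify the two group-theoretic axioms not already covered by the metric structure: that the multiplication map $+\colon \Z[X]\times\Z[X]\to\Z[X]$ and the inversion map $x\mapsto -x$ are both continuous with respect to the metric topology induced by $d_{\Z[X]}$. The abelian group structure itself is purely algebraic: $\Z[X]$ is the free abelian group on $X$, with identity $[e]$ (the zero element), and inversion given by swapping positive and negative parts, so only continuity needs checking. As in Theorem~\ref{thm:N[X]_top_monoid}, I would establish continuity by proving that both operations are $1$-Lipschitz, which is the cleanest route and immediately yields continuity.

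First I would handle addition. Equip $\Z[X]\times\Z[X]$ with the product metric $\rho\bigl((x,y),(x',y')\bigr):=d_{\Z[X]}(x,x')+d_{\Z[X]}(y,y')$. The key observation is that for $x,y\in\Z[X]$ one may choose representatives so that $(x+y)^{+}=x^{+}+y^{+}$ and $(x+y)^{-}=x^{-}+y^{-}$; this follows because $d_{\Z[X]}$ is insensitive to adding a common multiset to both positive and negative parts, exactly the cancellation property already exploited in the triangle-inequality step of Proposition~\ref{prop:Z[X]_metric}. Using this, $d_{\Z[X]}(x+y,\,x'+y')=d_{\N[X]}\bigl((x+y)^{+}+(x'+y')^{-},\,(x'+y')^{+}+(x+y)^{-}\bigr)$ can be rewritten, after regrouping, as $d_{\N[X]}\bigl((x^{+}+x'^{-})+(y^{+}+y'^{-}),\,(x'^{+}+x^{-})+(y'^{+}+y^{-})\bigr)$. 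Since $d_{\N[X]}$ is itself $1$-Lipschitz under the monoid addition (Theorem~\ref{thm:N[X]_top_monoid}), this is bounded above by $d_{\N[X]}(x^{+}+x'^{-},\,x'^{+}+x^{-})+d_{\N[X]}(y^{+}+y'^{-},\,y'^{+}+y^{-})=d_{\Z[X]}(x,x')+d_{\Z[X]}(y,y')$, giving the $1$-Lipschitz bound for $+$.

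Next I would treat inversion. Writing $(-x)^{+}=x^{-}$ and $(-x)^{-}=x^{+}$, I compute
\[
d_{\Z[X]}(-x,-y)=d_{\N[X]}\bigl((-x)^{+}+(-y)^{-},\,(-y)^{+}+(-x)^{-}\bigr)=d_{\N[X]}\bigl(x^{-}+y^{+},\,y^{-}+x^{+}\bigr).
\]
By the symmetry of $d_{\N[X]}$ this equals $d_{\N[X]}\bigl(x^{+}+y^{-},\,y^{+}+x^{-}\bigr)=d_{\Z[X]}(x,y)$, so inversion is in fact an isometry, hence continuous. Combining the continuity of $+$ and of inversion with the algebraic abelian group axioms shows that $\Z[X]$ is an abelian topological group.

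The main obstacle is the bookkeeping around representatives in the addition step: the positive/negative decomposition of $x+y$ need not literally be the sum of the decompositions of $x$ and $y$ (there may be cancellation between $x^{+}$ and $y^{-}$, etc.). The crucial point to justify carefully is that $d_{\Z[X]}$ is well defined independently of representative (already Proposition~\ref{prop:Z[X]_metric}(i)) and invariant under simultaneously padding both arguments by a common multiset, so that I am free to use the non-reduced representatives $(x+y)^{+}=x^{+}+y^{+}$, $(x+y)^{-}=x^{-}+y^{-}$ when evaluating $d_{\Z[X]}(x+y,x'+y')$. Once this representative-invariance is pinned down, the regrouping and the appeal to the $1$-Lipschitz property of $d_{\N[X]}$ are routine.
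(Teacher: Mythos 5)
Your proposal is correct, and for the continuity of addition it follows essentially the same route as the paper: both arguments equip $\Z[X]\times\Z[X]$ with the sum product metric and show $+$ is $1$-Lipschitz. You are more careful than the paper about the one point that actually needs care here, namely that one may evaluate $d_{\Z[X]}(x+y,x'+y')$ using the non-reduced representatives $(x+y)^{+}=x^{+}+y^{+}$, $(x+y)^{-}=x^{-}+y^{-}$ (justified by representative-independence from Proposition~\ref{prop:Z[X]_metric}(i)) and then regroup and invoke the subadditivity of $d_{\N[X]}$ under monoid addition; the paper compresses all of this into a single asserted inequality. The substantive difference is that you also verify continuity of inversion, showing $x\mapsto -x$ is an isometry via $(-x)^{+}=x^{-}$, $(-x)^{-}=x^{+}$ and the symmetry of $d_{\N[X]}$. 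The paper's proof omits this step entirely --- it only checks the monoid axioms (continuity of $+$, associativity, commutativity, identity) and then asserts the topological \emph{group} conclusion, so your argument actually closes a gap in the published proof rather than merely reproducing it. The inversion computation is exactly the right one-line argument, and nothing further is needed.
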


\begin{proof}

Define $+\;:\; \Z[X] \times \Z[X] \longrightarrow \Z[X]$ by
    \[
    \big(\sum_{i=1}^{m} a_i x_i\big) \times \big(\sum_{j=1}^{n} b_j y_j\big)
    \;\longmapsto\;
    \sum_{i=1}^{m} a_i x_i + \sum_{j=1}^{n} b_j y_j.
    \]
    Since $\Z[X]$ is a metric space, the product $\Z[X] \times \Z[X]$ is
    equipped with the canonical product metric
    \[
    \rho\bigl((\mathbf{x},\mathbf{y}),(\mathbf{x}',\mathbf{y}')\bigr)
    \;:=\;
    d_{\N[X]}(\mathbf{x},\mathbf{x}') \;+\; d_{\N[X]}(\mathbf{y},\mathbf{y}').
    \]
    By the definition of $d_{\Z[X]}$, taking the infimum over all permutations yields
    \[
    d_{\Z[X]}(\mathbf{x},\mathbf{x}')
    \;+\;
    d_{\Z[X]}(\mathbf{y},\mathbf{y}')
    \;\geq\;
    d_{\Z[X]}(\mathbf{x}+\mathbf{y},\,\mathbf{x}'+\mathbf{y}').
    \]
    Hence the map $+$ is $1$-Lipschitz with respect to $\rho$, and in particular
    it is continuous. 

    It is immediate from the definition that $+$ is associative and commutative.
    Moreover, the singleton multiset $[e]$ acts as a two--sided identity:
    \[
    \mathbf{x} + [e] = \mathbf{x} = [e] + \mathbf{x}
    \qquad
    \text{for all } \mathbf{x} \in \SP(X).
    \]
    Therefore, $\bigl(\Z[X],+\bigr)$ is an abelian topological group.

\end{proof}

We now show that the inclusion of $\N[X]$ into $\Z[X]$ is both an isometric
embedding and a homomorphism of topological monoids.

\begin{theorem}\label{thm:top_monoid_seq}
Under the hypotheses of Theorem~\ref{thm:Z[X]_topology_equivalence},
the canonical maps
\[
X \;\hookrightarrow\; \N[X] \;\hookrightarrow\; \Z[X]
\]
form a sequence of isometric embeddings, with the second map also being a
homomorphism of topological monoids.
\end{theorem}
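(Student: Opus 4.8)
The plan is to treat the three assertions separately, since each reduces to a short distance computation built on results already in hand. Write $\iota_1\colon X\hookrightarrow \N[X]$ for the map $x\mapsto[x]$ and $\iota_2\colon \N[X]\hookrightarrow \Z[X]$ for the inclusion of finite multisets as formal differences with trivial negative part. I would first verify that $\iota_1$ and $\iota_2$ are isometries, then observe that $\iota_2$ additionally respects the monoid operation; continuity of $\iota_2$ is then automatic from isometry, and the composite $\iota_2\circ\iota_1$ is an isometric embedding with no further work.

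For $\iota_1$, fix $x,y\in X$ and apply Remark~\ref{remark:d_SP_suff_condition} with $m=n=1$ and the minimal admissible choice $N=\max(m,n)=1$. The padded lists are simply $[x]$ and $[y]$, the symmetric group $S_1$ is trivial, and the matching sum collapses to the single term $d(x,y)$. Hence $d_{\N[X]}([x],[y])=d(x,y)$, so $\iota_1$ is isometric (and thus a topological embedding).

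For $\iota_2$, the key observation is the identification of the positive and negative parts of a finite multiset viewed inside $\Z[X]$: an element $\mathbf{x}\in\N[X]$ is the formal difference $\mathbf{x}-0$, so in the notation of Definition~\ref{def:Z[X]_metric} its positive part is $\mathbf{x}$ itself and its negative part is the zero multiset $[e]$. Substituting into that definition, for $\mathbf{x},\mathbf{y}\in\N[X]$ I obtain
\[
d_{\Z[X]}\bigl(\iota_2(\mathbf{x}),\iota_2(\mathbf{y})\bigr)
= d_{\N[X]}\bigl(\mathbf{x}+[e],\,\mathbf{y}+[e]\bigr)
= d_{\N[X]}(\mathbf{x},\mathbf{y}),
\]
where the last equality uses that $[e]$ is the two-sided identity of the monoid $\N[X]$ (equivalently, the well-definedness established in Proposition~\ref{prop:SP(X)_metric}, namely that adjoining copies of the basepoint $e$ leaves the matching distance unchanged). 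Thus $\iota_2$ is isometric.

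Finally, $\iota_2$ is a monoid homomorphism: it carries the identity $[e]$ to the zero element of $\Z[X]$ and satisfies $\iota_2(\mathbf{x}+\mathbf{y})=\iota_2(\mathbf{x})+\iota_2(\mathbf{y})$ directly, since both operations are concatenation of formal sums. Being isometric it is continuous, so together with Theorems~\ref{thm:N[X]_top_monoid} and~\ref{thm:Z[X]_topology_equivalence} it is a homomorphism of topological monoids, completing the chain $X\hookrightarrow\N[X]\hookrightarrow\Z[X]$. I do not expect a genuine obstacle in this argument: the only point demanding care is the bookkeeping of positive and negative parts for elements of $\N[X]$ regarded inside $\Z[X]$, combined with the already-proven invariance of $d_{\N[X]}$ under adjoining the basepoint; everything else is formal.
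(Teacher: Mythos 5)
Your proposal is correct and follows essentially the same route as the paper: the first map is handled by the $n=1$ case of the isometric embedding of $\SP^n(X)$ (you unwind this via Remark~\ref{remark:d_SP_suff_condition}, which is exactly how that theorem is proved), and the second map is handled by observing that elements of $\N[X]$ have trivial negative part, so $d_{\Z[X]}$ collapses to $d_{\N[X]}$, with the homomorphism property being formal.
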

\begin{proof}
The first map $X \hookrightarrow \N[X]$ is the case $n=1$ of
Theorem~\ref{thm:SPn_iso_embed}, and hence is an isometric embedding.

For the second map $\iota \colon \N[X] \hookrightarrow \Z[X]$, note that $\iota$
is simply the inclusion of sets, with $\iota(0)=0$ and
$\iota(x+y)=x+y$. Thus $\iota$ is a homomorphism of topological monoids. It
remains to verify that $\iota$ is an isometric embedding. Since
$x,y \in \N[X]$, we have $x^{-}=y^{-}=0$, and therefore
\[
d_{\Z[X]}(\iota(x),\iota(y))
  = d_{\N[X]}(x^{+}+y^{-},\, y^{+}+x^{-})
  = d_{\N[X]}(x,y),
\]
which proves that $\iota$ is indeed an isometric embedding.
\end{proof}

\section{Quotient Metric Spaces}

In this final section we recall a general construction of quotient metrics. We show that collapsing a closed subset of a
metric space yields a natural quotient metric compatible with the quotient
topology.

\begin{proposition}\label{prop:quotient-metric}
Let $(M,d)$ be a metric space and let $H\subset M$ be a nonempty closed subset.  
Define
\[
d(x,H):=\inf_{h\in H} d(x,h)\qquad (x\in M).
\]
On the quotient set $M/H$ (all points of $H$ identified to a single point), set
\[
\overline d\big([x],[y]\big)\;:=\;\min\bigl\{\,d(x,y),\; d(x,H)+d(y,H)\,\bigr\}\qquad([x],[y]\in M/H).
\]
Then $\overline d$ is a well-defined metric on $M/H$. Moreover, the quotient map
$q:M\to M/H$, $q(x)=[x]$, is $1$-Lipschitz.
\end{proposition}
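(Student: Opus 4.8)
The plan is to check the metric axioms in turn, leaving the triangle inequality for last as it is the only nontrivial one. The single analytic fact I will rely on is that the distance-to-$H$ function $x \mapsto d(x,H)$ is $1$-Lipschitz, that is $|d(x,H)-d(y,H)| \le d(x,y)$; this is standard, following from the triangle inequality in $M$ upon taking infima over $H$. For well-definedness, the key observation is that if $x \in H$ then $d(x,y) \ge d(y,H)$ for every $y$, so the term $d(x,H)+d(y,H)=d(y,H)$ realizes the minimum and $\overline d([x],[y])=d(y,H)$ no longer depends on the chosen representative $x \in H$. Applying this to each argument separately (the singleton classes having unique representatives) shows that $\overline d$ is independent of representatives and hence well-defined on $M/H$.

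Nonnegativity is immediate since both entries of the minimum are nonnegative, and symmetry follows from the symmetry of $d$ and the commutativity of $+$. For the identity of indiscernibles, if $[x]=[y]$ then either $x=y$ or both points lie in $H$, and in either case one entry of the minimum vanishes, so $\overline d([x],[y])=0$. Conversely, if $\overline d([x],[y])=0$ then either $d(x,y)=0$, forcing $x=y$, or $d(x,H)+d(y,H)=0$, forcing $d(x,H)=d(y,H)=0$. Here I use that $H$ is closed, so $d(x,H)=0$ implies $x\in H$; thus $x,y\in H$ and $[x]=[y]$. This is the only step where closedness of $H$ is used, and it is precisely what rules out distinct classes at distance zero.

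The triangle inequality is the main obstacle, which I would dispatch by a four-way case analysis. Write $a=d(x,H)$, $b=d(y,H)$, $c=d(z,H)$, and set $P=\overline d([x],[y])=\min\{d(x,y),a+b\}$ and $Q=\overline d([y],[z])=\min\{d(y,z),b+c\}$; the goal is $\min\{d(x,z),a+c\}\le P+Q$, which holds as soon as $P+Q$ dominates either $d(x,z)$ or $a+c$. If both minima are attained by the ambient distances, then $P+Q=d(x,y)+d(y,z)\ge d(x,z)$ by the triangle inequality in $M$. If both are attained by the distance-to-$H$ terms, then $P+Q=a+2b+c\ge a+c$ by nonnegativity. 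The two mixed cases are where the Lipschitz bound enters: for instance if $P=d(x,y)$ and $Q=b+c$, then $d(x,y)\ge a-b$ gives $P+Q\ge a+c$, and the remaining mixed case $P=a+b$, $Q=d(y,z)$ is symmetric via $d(y,z)\ge c-b$. Thus $\min\{d(x,z),a+c\}\le P+Q$ in every case, establishing the triangle inequality and completing the verification that $\overline d$ is a metric.

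Finally, the quotient map $q$ is $1$-Lipschitz directly from the definition, since $\overline d(q(x),q(y))=\min\{d(x,y),d(x,H)+d(y,H)\}\le d(x,y)$ for all $x,y\in M$.
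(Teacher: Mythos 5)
Your proof is correct and follows essentially the same route as the paper: both rely on the $1$-Lipschitz property of $x\mapsto d(x,H)$ (the paper's Lemma~\ref{lem:distance-to-H}), use closedness of $H$ only for positive definiteness, and reduce the triangle inequality to a four-way case analysis on which term attains each minimum. If anything, you are more explicit than the paper, which leaves the four cases to the reader and is terser on well-definedness, where your observation that $\overline d([x],[y])=d(y,H)$ whenever $x\in H$ is exactly the needed point.
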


\begin{lemma}\label{lem:distance-to-H}
For all $x,y\in M$ one has
\[
d(x,H)\;\le\; d(x,y) + d(y,H).
\]
\end{lemma}

\begin{proof}
For any $h\in H$, the triangle inequality gives $d(x,h)\le d(x,y)+d(y,h)$.
Taking $\inf_{h\in H}$ yields $d(x,H)\le d(x,y)+d(y,H)$.
\end{proof}

\begin{proof}[Proof of Proposition~\ref{prop:quotient-metric}]
\noindent\textbf{Well-definedness and symmetry.}
The formula depends only on the classes $[x],[y]$ because $d(\cdot,H)$ is class-invariant, and if $x,x'\in H$ then $[x]=[x']$ and $d(x,H)=d(x',H)=0$. Symmetry is immediate.

\noindent\textbf{Positive definiteness.}
If $\overline d([x],[y])=0$, then either $d(x,y)=0$ (hence $[x]=[y]$), or $d(x,H)+d(y,H)=0$, which implies $d(x,H)=d(y,H)=0$. Since $H$ is closed, this forces $x,y\in H$, so $[x]=[y]=[H]$. Thus $\overline d$ vanishes only on the closed subset $H$.

\noindent\textbf{Triangle inequality.}
Fix $x,y,z\in M$. By Lemma~\ref{lem:distance-to-H},
\[
d(x,H)\le d(x,y)+d(y,H),\qquad d(z,H)\le d(z,y)+d(y,H),
\]
and the triangle inequality gives $d(x,z)\le d(x,y)+d(y,z)$. Hence
\[
\overline d([x],[z])
=\min\{\,d(x,z),\ d(x,H)+d(z,H)\}
\le \min\{\,d(x,y)+d(y,z),\ d(x,H)+d(z,H)\}.
\]

It remains to verify that
\begin{align}\label{eq:key}
\min\{\,d(x,y)+d(y,z),\ d(x,H)+d(z,H)\}
&\le \min\{\,d(x,y),\ d(x,H)+d(y,H)\}  \\
&\quad + \min\{\,d(y,z),\ d(y,H)+d(z,H)\}. \nonumber
\end{align}
A case analysis shows this inequality holds in all four possible choices of minima,
using Lemma~\ref{lem:distance-to-H} as needed. Thus the triangle inequality is satisfied.

\noindent\textbf{1-Lipschitz property.}
For all $x,y\in M$,
\[
\overline d\big(q(x),q(y)\big)=\min\{d(x,y),\,d(x,H)+d(y,H)\}\le d(x,y),
\]
so $q$ is $1$-Lipschitz.

All metric axioms are verified; hence $\overline d$ is a metric on $M/H$.
\end{proof}

\begin{remark}
If one uses the same formula on $M$ itself,
\[
d'(x,y):=\min\{d(x,y),\,d(x,H)+d(y,H)\},
\]
then $d'$ is only a pseudometric, since distinct points in $H$ have distance $0$.
Passing to the quotient $M/H$ restores positive definiteness, provided $H$ is closed.
\end{remark}

\begin{proposition}[Topology equivalence]\label{prop:quot-top=metric-top}
With notation as above, the topology induced by the metric $\overline d$ on $M/H$ coincides with the quotient topology along $q:M\to M/H$, $q(x)=[x]$.
\end{proposition}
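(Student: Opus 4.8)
The plan is to show the two topologies on $M/H$ have exactly the same open sets by proving each is contained in the other, regarding $M/H$ as a single underlying set carrying two topologies. One inclusion is essentially free: by Proposition~\ref{prop:quotient-metric} the map $q$ is $1$-Lipschitz, hence continuous as a map $(M,d)\to(M/H,\overline d)$. Continuity means that the preimage under $q$ of every $\overline d$-open set is $d$-open in $M$, which is precisely the defining condition for openness in the quotient topology. Thus every $\overline d$-open set is quotient-open, so the metric topology is coarser than the quotient topology. This direction uses nothing beyond the already established Lipschitz property.

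For the reverse inclusion I would take a quotient-open set $V$, set $W:=q^{-1}(V)$ (open in $M$ and saturated, i.e.\ either containing all of $H$ or disjoint from it), and produce, for each $[x]\in V$, a $\overline d$-ball around $[x]$ contained in $V$. For a class $[x]$ with $x\notin H$ this is clean: since $H$ is closed, $d(x,H)>0$, and openness of $W$ gives $r>0$ with $B_d(x,r)\subseteq W$. Setting $\varepsilon:=\min\{r,d(x,H)\}$ and analysing the two terms of $\overline d([x],[y])=\min\{d(x,y),\,d(x,H)+d(y,H)\}$, the term $d(x,H)+d(y,H)$ is at least $d(x,H)\ge\varepsilon$, so whenever $\overline d([x],[y])<\varepsilon$ the minimum is realised by $d(x,y)<\varepsilon\le r$; hence $y\in B_d(x,r)\subseteq W$ and $[y]\in V$. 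Thus $B_{\overline d}([x],\varepsilon)\subseteq V$ for every non-collapsed point.

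The main obstacle is the collapsed point $[H]$ itself. Here $H\subseteq W$, and since $\overline d([H],[y])=d(y,H)$, the ball $B_{\overline d}([H],\varepsilon)$ pulls back under $q$ to the open $\varepsilon$-tube $\{y\in M: d(y,H)<\varepsilon\}$ about $H$. So $B_{\overline d}([H],\varepsilon)\subseteq V$ is equivalent to this tube being contained in $W$, and the crux is whether an arbitrary open $W\supseteq H$ must contain some uniform $\varepsilon$-tube around $H$. This is exactly a tube-lemma statement, and it is the step I expect to carry all the difficulty: it can fail for a general closed $H$ (take $H$ the $x$-axis in $\mathbb R^2$ and $W=\{(x,y):|y|<1/(1+x^2)\}$, which contains no uniform strip), so the equivalence of topologies at $[H]$ genuinely requires extra input. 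I would therefore invoke a hypothesis that rescues the tube lemma — compactness of $H$ being cleanest, since finitely many balls covering $H$ then yield a uniform $\varepsilon$, with local compactness or properness of $M$ near $H$ as alternatives — and conclude that, given such a tube, $B_{\overline d}([H],\varepsilon)\subseteq V$, completing the reverse inclusion and hence the equivalence. Absent such a hypothesis only $\mathcal T_{\overline d}\subseteq\mathcal T_{\mathrm{quot}}$ is guaranteed, and I would flag this as the precise point where the statement must be qualified.
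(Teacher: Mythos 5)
Your analysis of the easy direction and of the non-collapsed points matches the paper's proof: the $1$-Lipschitz property gives $\mathcal T_{\mathrm{met}}\subseteq\mathcal T_{\mathrm{quo}}$, and for $[x]$ with $x\notin H$ the paper likewise uses $r:=\min\{d(x,M\setminus W),\tfrac12 d(x,H)\}$ to rule out the $d(x,H)+d(y,H)$ branch of the minimum, exactly as you do with $\varepsilon=\min\{r,d(x,H)\}$.

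The interesting part is the collapsed point. The paper's proof handles the case $x\in H$ by asserting that, since $W:=q^{-1}(U)$ is open and contains $H$, the quantity $\rho:=d(H,M\setminus W)$ is strictly positive, and then takes the ball $B_{\overline d}([H],\rho)$. That assertion is exactly the uniform-tube statement you isolated, and it is \emph{not} justified for a general closed $H$: your example with $M=\mathbb R^2$, $H$ the $x$-axis, and $W=\{(x,y):|y|<1/(1+x^2)\}$ gives an open saturated $W\supseteq H$ with $d(H,M\setminus W)=0$, and since $\overline d([H],[y])=d(y,H)$, no $\overline d$-ball about $[H]$ is contained in $q(W)$. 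Hence $q(W)$ is quotient-open but not $\overline d$-open, so the proposition as stated actually fails without an additional hypothesis. In other words, you have not merely found a gap in your own argument; you have located a genuine error in the paper's proof (the unjustified claim $\rho>0$) together with a counterexample to the statement itself. Your proposed fix --- assuming $H$ compact (or some properness of $M$ near $H$) so that finitely many balls covering $H$ yield a uniform $\varepsilon$-tube inside $W$ --- is the right way to salvage the result, and under that hypothesis your argument completes correctly. The only thing to add is to state explicitly that without such a hypothesis one retains only the inclusion $\mathcal T_{\mathrm{met}}\subseteq\mathcal T_{\mathrm{quo}}$, which you already flag.
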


\begin{proof}
Let $\mathcal T_{\rm met}$ be the metric topology induced by $\overline d$ and $\mathcal T_{\rm quo}$ the quotient topology, i.e.\ $U\in\mathcal T_{\rm quo}\iff q^{-1}(U)$ is open in $M$.

Since $q:(M,d)\to(M/H,\overline d)$ is $1$-Lipschitz, it is continuous. As $\mathcal T_{\rm quo}$ is the finest topology making $q$ continuous, we must have $\mathcal T_{\rm met}\subset \mathcal T_{\rm quo}$.

Let $U\in\mathcal T_{\rm quo}$, so $W:=q^{-1}(U)\subset M$ is open. We show each $[x]\in U$ admits an $\overline d$-ball contained in $U$.

\noindent\textbf{Case $x\notin H$.}  
Because $W$ is open and $x\in W$, the distance to the complement is positive:
\[
r_1:=d\big(x,M\setminus W\big)>0.
\]
Also $d(x,H)>0$. Set $r:=\min\{\,r_1,\ \tfrac{1}{2}d(x,H)\}>0$. If $[y]\in M/H$ with $\overline d([x],[y])<r$, then either $d(x,y)<r\le r_1$ (so $y\in W$), or $d(x,H)+d(y,H)<r\le \tfrac12 d(x,H)$, which is impossible. Hence $[y]\in U$.

\noindent\textbf{Case $x\in H$.}  
Then $[x]=[H]\in U$ and $H\subset W$. As $W$ is open, $\rho:=d(H,M\setminus W)>0$. If $\overline d([H],[y])<\rho$, then $d(y,H)<\rho$, so $y\in W$. Hence $B_{\overline d}([H],\rho)\subset U$.

In both cases, $U$ is open in $\mathcal T_{\rm met}$. Thus $\mathcal T_{\rm quo}\subset \mathcal T_{\rm met}$.

Combining the two steps yields $\mathcal T_{\rm met}=\mathcal T_{\rm quo}$.
\end{proof}

\paragraph*{Acknowledgements}
The author would like to thank the contributors of the MathOverflow discussion 
\emph{``The metrizability of symmetric products of metric spaces''} 
(\url{https://mathoverflow.net/questions/193988/the-metrizability-of-symmetric-products-of-metric-spaces}) 
for helpful insights and references.

\bibliographystyle{plainurl}
\bibliography{bib}

\end{document}